\numberwithin{equation}{section}
\numberwithin{figure}{section}
  \theoremstyle{definition}
  \newtheorem{defn}{\protect\definitionname}
  \theoremstyle{plain}
  \newtheorem{lem}{\protect\lemmaname}
  \theoremstyle{remark}
  \newtheorem{rem}{\protect\remarkname}
  \theoremstyle{plain}
  \newtheorem{assumption}{\protect\assumptionname}
\theoremstyle{plain}
\newtheorem{thm}{\protect\theoremname}
  \providecommand{\assumptionname}{Assumption}
  \providecommand{\definitionname}{Definition}
  \providecommand{\lemmaname}{Lemma}
  \providecommand{\remarkname}{Remark}
\providecommand{\theoremname}{Theorem}
\begin{document}

\title{The Distance Function from the Boundary \linebreak{}
of a Domain with Corners}

\author{Mohammad Safdari}
\begin{abstract}
We study the regularity of the distance function to the boundary of
a domain in $\mathbb{R}^{2}$, with respect to some asymmetric norms.
We allow the boundary of the domain to have corners. We obtain an
explicit formula for the second derivative of these distance functions.
Furthermore, we study a generalized notion of the ridge of a domain,
which is the set of singularities of a distance function to the boundary
of the domain. We completely characterize the ridge by using a generalized
notion of curvature.\thanks{School of Mathematics, Institute for Research in Fundamental Sciences
(IPM), P.O. Box: 19395-5746, Tehran, Iran\protect \\
Email address: safdari@sharif.ir}
\end{abstract}

\maketitle

\section{Introduction}

The distance function from the boundary of a domain appears in different
parts of mathematical analysis, especially in the study of partial
differential equations. \citet{MR2336304} studied the distance function,
and used it to analyze PDEs of Monge\textendash Kantorovich type arising
in optimal transport theory. \citet{MR2094267} studied the distance
function in the framework of Hamilton-Jacobi equations and Finsler
geometry. \citet{itoh2001lipschitz}, and \citet{mantegazza2003hamilton}
studied the distance function in Riemannian manifolds; and \citet{CHRUSCIEL20021}
considered the case of Lorentzian space-times. On the other hand,
\citet{clarke1995proximal} and \citet{poliquin2000local} studied
the distance function in the context of nonsmooth analysis in Hilbert
spaces.

In {[}\citealp{Safdari20151}\nocite{safdari2017shape,MR1}\textendash \citealp{Safdari[18]}{]},
we examined the distance function, and used it to study variational
problems with gradient constraint, and their corresponding free boundary.
Let us introduce this problem in more detail, and see how we utilized
the distance function in its study. Let $K$ be a compact convex subset
of $\mathbb{R}^{n}$ whose interior contains the origin. We recall
from convex analysis (see \citep{MR3155183}) that the \textbf{gauge}
function of $K$ is the convex function 
\begin{equation}
\gamma_{K}(x):=\inf\{\lambda>0:x\in\lambda K\}.\label{eq: gaug}
\end{equation}
The gauge function $\gamma_{K}$ is subadditive and positively 1-homogenous,
so it looks like a norm on $\mathbb{R}^{n}$, except that $\gamma_{K}(-x)$
is not necessarily the same as $\gamma_{K}(x)$. Another notion is
that of the \textbf{polar} of $K$ 
\begin{equation}
K^{\circ}:=\{x:\langle x,y\rangle\leq1\,\textrm{ for all }y\in K\},\label{eq: K0}
\end{equation}
where $\langle\,,\rangle$ is the standard inner product on $\mathbb{R}^{n}$.
$K^{\circ}$, too, is a compact convex set containing the origin as
an interior point.

Let $U\subset\mathbb{R}^{n}$ be a bounded open set. Let $u$ be the
minimizer of 
\[
J[v]:=\int_{U}F(Dv)+g(v)\,dx,
\]
over 
\[
W_{K^{\circ}}:=\{v\in H_{0}^{1}(U):Dv\in K^{\circ}\textrm{ a.e.}\}.
\]
Note that $Dv\in K^{\circ}$ is equivalent to $\gamma_{K^{\circ}}(Dv)\le1$.
Thus $\gamma_{K^{\circ}}$ defines the gradient constraint. It can
be shown that $u$ is also the unique minimizer of $J$ over 
\[
W_{d_{K}}:=\{v\in H_{0}^{1}(U):-\bar{d}_{K}\le v\leq d_{K}\textrm{ a.e.}\},
\]
where 
\begin{align}
 & d_{K}(x)=d_{K}(x,\partial U):=\underset{y\in\partial U}{\min}\,\gamma_{K}(x-y),\nonumber \\
 & \bar{d}_{K}(x)=\bar{d}_{K}(x,\partial U):=\underset{y\in\partial U}{\min}\,\gamma_{K}(y-x).\label{eq: rho}
\end{align}
Note that $\bar{d}_{K}=d_{-K}$, since $\gamma_{-K}(\cdot)=\gamma_{K}(-\,\cdot)$.
Thus the distance function from $\partial U$ with respect to some
gauge function naturally appears in the study of variational problems
with gradient constraint.%

In \citep{Safdari20151,safdari2017shape} we investigated the above
problem in two dimensions, when $K$ is given by the $p$-norm, i.e.
when $K$ is equal to 
\[
\{(x_{1},x_{2}):(|x_{1}|^{p}+|x_{2}|^{p})^{\frac{1}{p}}\le1\}.
\]
In this regard, we had to understand the behavior of $d_{K}$. In
\citep{Safdari[18]} we generalized the above work to arbitrary dimensions,
for arbitrary gradient constraints. Here, we approximated an arbitrary
convex set $K$ with convex sets $K_{i}$ whose boundaries are $C^{2}$,
and have positive principal curvatures. This analysis relied heavily
on the properties of $d_{K}$ and $d_{K_{i}}$. Especially, we were
able to find an explicit formula for $D^{2}d_{K_{i}}$, which was
very crucial in our analysis. To the best of author's knowledge, formulas
of this kind have not appeared in the literature before, except for
the simple case of the Euclidean distance to the boundary.

In this work, we examine the regularity of $d_{K}$ in two dimensions.
We obtain formula (\ref{eq: laplace d_K}) for $D^{2}d_{K}$, when
$\partial K$ is $C^{2}$. In contrast to \citep{Safdari[18]}, here
we allow the curvature of $\partial K$ to vanish at finitely many
points. Hence this work includes the case of $p$-norms as a special
case. Also, here we allow $\partial U$ to have corners; which makes
the behavior of $d_{K}$ more complicated. In addition, we will completely
characterize the set of singularities of $d_{K}$, which we call the
$d_{K}$-ridge. 

The paper is organized as follows. In Section \ref{sec: Notation-and-Prelim}
we introduce our notation, and review some preliminary facts about
$d_{K}$ and $\gamma_{K}$. In Section \ref{sec: Regularity d} we
will study the regularity of $d_{K}$. Theorems \ref{thm: d_K is C^2},
\ref{thm: d_K is C^2, corners} are the main results of this section;
and provide detailed information about $d_{K}$. Here, we also introduce
a generalized notion of curvature, which helps us to understand the
behavior of $d_{K}$. Finally, in Section \ref{sec: Characterizing ridge}
we will characterize the set of singularities of $d_{K}$.

\section{\label{sec: Notation-and-Prelim}Notation and Preliminaries}

Let us fix some notation first. We denote by $C^{\omega}$ the space
of analytic functions (or submanifolds); so in the following when
we talk about $C^{k,\alpha}$ regularity with $k$ greater than some
fixed integer, we are also including $C^{\infty}$ and $C^{\omega}$.
We will use the abbreviations 
\begin{eqnarray*}
\gamma:=\gamma_{K}, &  & \gamma^{\circ}:=\gamma_{K^{\circ}}.
\end{eqnarray*}
We also denote the closed line segment between two points $x,y$ by
$[x,y]$, the open line segment by $]x,y[$, and the half-closed line
segments by $]x,y]$, $[x,y[$. When $d_{K}(x)=\gamma(x-y)$ for some
$y\in\partial U$, we call $y$ a \textbf{$\boldsymbol{d_{K}}$-closest}
point to $x$ on $\partial U$. Note that when $y$ is a $d_{K}$-closest
point on $\partial U$ to $x\in U$, the segment $[x,y[$ is in $U$.
We also use $B_{r}(x)$ to denote the open ball of radius $r$ centered
at $x$.

For $X\subset\mathbb{R}^{n}$, $v\in\mathbb{R}^{n}$, and $r\in\mathbb{R}$
we use the conventions 
\begin{eqnarray*}
 &  & rX:=\{rx\,\mid\,x\in X\},\\
 &  & v+X:=\{v+x\,\mid\,x\in X\}.
\end{eqnarray*}
When $n=2$, we use the notation $v^{\perp}:=(-v_{2},v_{1})$ for
the $90^{\circ}$ counterclockwise rotation of a vector $v=(v_{1},v_{2})$.
We also set $D^{\perp}f:=(Df)^{\perp}$ for a function $f$ of two
variables.

\subsection{The ridge}

First, we generalize the notion of ridge introduced by \citet{MR0184503},
and \citet{MR534111}.
\begin{defn}
The \textbf{$\boldsymbol{d_{K}}$-ridge} of $U$ is the set of all
points $x\in U$ where $d_{K}(x)=d_{K}(x,\partial U)$ is not $C^{1,1}$
in any neighborhood of $x$. We denote it by 
\[
R_{K}.
\]
\end{defn}
Recall that $K$ is a compact convex subset of $\mathbb{R}^{n}$ with
$0$ in its interior, and its gauge function $\gamma$ satisfies 
\begin{eqnarray*}
 &  & \gamma(rx)=r\gamma(x),\\
 &  & \gamma(x+y)\le\gamma(x)+\gamma(y),
\end{eqnarray*}
for all $x,y\in\mathbb{R}^{n}$ and $r\ge0$. Note that as $K$ is
closed, $K=\{\gamma\le1\}$; and as it has nonempty interior, $\partial K=\{\gamma=1\}$.
Thus, $\gamma(x-y)\le r$ is equivalent to $y\in x-rK$. Also, note
that as $B_{c}(0)\subseteq K\subseteq B_{C}(0)$ for some $C\ge c>0$,
we have 
\[
\frac{1}{C}|x|\le\gamma(x)\le\frac{1}{c}|x|,
\]
for all $x\in\mathbb{R}^{n}$. Moreover, from the definition of $d_{K}$
we easily obtain 
\begin{equation}
-\gamma(x-y)\le d_{K}(y)-d_{K}(x)\le\gamma(y-x).\label{eq: d_K Lip}
\end{equation}
Thus in particular, $d_{K}$ is Lipschitz continuous.

It is well known that for all $x,y\in\mathbb{R}^{n}$, we have 
\begin{equation}
\langle x,y\rangle\leq\gamma(x)\gamma^{\circ}(y).\label{eq: gen Cauchy-Schwartz}
\end{equation}
In fact, more is true and we have 
\begin{equation}
\gamma^{\circ}(y)=\underset{x\ne0}{\max}\frac{\langle x,y\rangle}{\gamma(x)}.\label{eq: gen Cauchy-Schwartz 2}
\end{equation}
For a proof of this, see page 54 of \citep{MR3155183}.

It is easy to see that the the strict convexity of $K$ (which means
that $\partial K$ does not contain any line segment) is equivalent
to the strict convexity of $\gamma$. By homogeneity of $\gamma$,
the latter is equivalent to 
\[
\gamma(x+y)<\gamma(x)+\gamma(y)
\]
when $x\ne cy$ and $y\ne cx$ for any $c\ge0$. 

The following three lemmas do not require any assumption about $\partial U$.
\begin{lem}
\label{lem: segment to the closest pt}Suppose $y$ is one of the
$d_{K}$-closest points on $\partial U$ to $x\in U$. Then $y$ is
a $d_{K}$-closest point on $\partial U$ to every point of $]x,y[$.
If in addition $\gamma$ is strictly convex, then $y$ is the unique
$d_{K}$-closest point on $\partial U$ to points of $]x,y[$.
\end{lem}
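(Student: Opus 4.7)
The plan is to combine the subadditivity and positive homogeneity of $\gamma$ with the minimality of $y$. Parameterize $z \in\ ]x,y[$ as $z = (1-t)x + ty$ for some $t \in (0,1)$. Then $x - z = t(x-y)$ and $z - y = (1-t)(x-y)$, so by homogeneity $\gamma(x-z) = t\, d_K(x)$ and $\gamma(z-y) = (1-t)\, d_K(x)$. For the first claim I would argue by contradiction: if some $y' \in \partial U$ gave $\gamma(z-y') < (1-t)d_K(x)$, then subadditivity would yield
\[
d_K(x) \le \gamma(x-y') \le \gamma(x-z) + \gamma(z-y') < t\, d_K(x) + (1-t)\, d_K(x) = d_K(x),
\]
contradicting the definition of $d_K(x) = \gamma(x-y)$. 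Hence $y$ is a $\gamma$-closest point on $\partial U$ to $z$, and as a byproduct $d_K(z) = (1-t)\, d_K(x)$.

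For the uniqueness under strict convexity of $\gamma$, suppose $y' \in \partial U$ with $y' \ne y$ also satisfies $\gamma(z-y') = (1-t)\, d_K(x)$. Running the same inequality chain, every step must now be an equality; in particular $\gamma(x-y') = \gamma(x-z) + \gamma(z-y')$. The characterization of strict convexity recalled in the excerpt says precisely that equality in subadditivity forces the two summands to be non-negative multiples of each other. Since $z \in\ ]x,y[\, \subset U$ (as noted in the excerpt) while $y' \in \partial U$, neither $x-z$ nor $z-y'$ vanishes, so $z - y' = \lambda(x-z) = \lambda t(x-y)$ for some $\lambda > 0$. Applying $\gamma$ and comparing with $\gamma(z-y') = (1-t)\, d_K(x)$ pins down $\lambda = (1-t)/t$, whence $z - y' = (1-t)(x-y) = z - y$ and so $y' = y$, a contradiction.

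I expect no serious obstacle here: the whole argument is a two-line application of subadditivity and its equality case. The only thing to watch is the asymmetry of $\gamma$, since $\gamma(v)$ and $\gamma(-v)$ are generally different. One must keep the order of subtraction consistent with the definition of $d_K$ (base point minus boundary point) throughout; otherwise the homogeneity identities for $\gamma(x-z)$ and $\gamma(z-y)$ silently acquire wrong factors. Beyond this bookkeeping the proof is routine.
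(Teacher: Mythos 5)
Your proof is correct. The first half is exactly the paper's argument: subadditivity plus positive homogeneity turns a strictly better competitor for $z$ into a strictly better competitor for $x$, contradicting minimality of $y$.

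For uniqueness your route differs in organization from the paper's, though it rests on the same strict-subadditivity fact. The paper splits into two cases: if the competing closest point $w$ is \emph{not} collinear with $x,z,y$ it invokes strict convexity to get the strict inequality $\gamma(x-w)<\gamma(x-z)+\gamma(z-w)\le\gamma(x-y)$ and contradicts minimality of $y$; the collinear case is dispatched separately by an informal ``order of the points on the line'' argument. You instead observe that the full chain $d_K(x)\le\gamma(x-y')\le\gamma(x-z)+\gamma(z-y')=d_K(x)$ collapses to equalities, apply the equality case of strict subadditivity to force $z-y'=\lambda(x-z)$ with $\lambda>0$, and then pin down $\lambda=(1-t)/t$ by comparing $\gamma$-values, concluding $y'=y$ directly. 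This subsumes the collinear case (note that for nonzero anti-parallel vectors subadditivity is automatically strict, so the equality case really does force a \emph{positive} multiple), and it replaces the paper's slightly hand-waved collinear subcase with an explicit computation. Both arguments are valid; yours is a touch cleaner because it avoids the case split, at the cost of needing the quantitative identification of the scalar $\lambda$. Your closing remark about respecting the order of subtraction in the asymmetric $\gamma$ is exactly the right thing to watch, and you do keep it consistent throughout.
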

\begin{proof}
Let $z\in]x,y[$, and suppose to the contrary that there is $w\in\partial U-\{y\}$
such that 
\[
\gamma(z-w)<\gamma(z-y).
\]
Then we have 
\[
\gamma(x-w)\le\gamma(x-z)+\gamma(z-w)<\gamma(x-z)+\gamma(z-y)=\gamma(x-y).
\]
Which is a contradiction. 

Now suppose $\gamma$ is strictly convex, and 
\[
\gamma(z-w)\le\gamma(z-y).
\]
If $w$ belongs to the line containing $x,z,y$, then considering
the order of these four points on that line, we can easily arrive
at a contradiction. Hence, $x,z,w$ are not collinear, and by strict
convexity of $\gamma$ we get 
\[
\gamma(x-w)<\gamma(x-z)+\gamma(z-w)\le\gamma(x-z)+\gamma(z-y)=\gamma(x-y).
\]
Which is a contradiction too.
\end{proof}
\begin{lem}
\label{lem: d_K not C1}Suppose $\gamma$ is strictly convex. If $d_{K}(x)=\gamma(x-y)=\gamma(x-z)$
for two different points $y,z$ on $\partial U$, then $d_{K}$ is
not differentiable at $x$.
\end{lem}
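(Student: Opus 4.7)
The plan is to argue by contradiction: assuming $d_K$ is differentiable at $x$ with gradient $p:=Dd_{K}(x)$, I will extract enough linear information from $p$ to force the two closest points $y$ and $z$ to coincide. The starting input is that $[x,y[$ and $[x,z[$ lie in $U$, and, by Lemma~\ref{lem: segment to the closest pt}, along each of these segments $d_K$ is computed by the corresponding endpoint on $\partial U$. Concretely, for $t\in[0,1]$ and $x_t:=(1-t)x+ty$ one has $d_K(x_t)=(1-t)\gamma(x-y)$, so taking the one-sided derivative at $t=0$ yields $\langle p,x-y\rangle=\gamma(x-y)$; the analogous computation along $[x,z]$ gives $\langle p,x-z\rangle=\gamma(x-z)$.

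Adding these two identities and combining them with the natural upper bounds produces a chain of forced equalities. From the Lipschitz estimate \eqref{eq: d_K Lip} and \eqref{eq: gen Cauchy-Schwartz 2}, differentiability of $d_K$ at $x$ forces $\gamma^{\circ}(p)\le 1$. Writing $u:=x-y$ and $v:=x-z$, I would then chain
\[
\gamma(u)+\gamma(v)=\langle p,u+v\rangle\le\gamma^{\circ}(p)\,\gamma(u+v)\le\gamma(u+v)\le\gamma(u)+\gamma(v),
\]
using \eqref{eq: gen Cauchy-Schwartz} and the subadditivity of $\gamma$. Since the two ends agree, every inequality is an equality; in particular,
\[
\gamma\bigl((x-y)+(x-z)\bigr)=\gamma(x-y)+\gamma(x-z).
\]
By the characterization of strict convexity of $\gamma$ recalled just before the lemma, this forces either $x-y=c(x-z)$ or $x-z=c(x-y)$ for some $c\ge 0$.

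The last step is a geometric case analysis, which I expect to be the main obstacle: the algebra above is essentially automatic, but here one must combine the hypotheses $y,z\in\partial U$ with $y\ne z$ and $x\in U$, and crucially invoke the \emph{uniqueness} clause of Lemma~\ref{lem: segment to the closest pt}, which is itself available only because $\gamma$ is strictly convex. Suppose first $x-y=c(x-z)$ with $c\ge 0$; the values $c=0$ and $c=1$ immediately force $x=y\in\partial U$ or $y=z$, both excluded. If $0<c<1$ then $y\in\;]x,z[\,\subset U$, contradicting $y\in\partial U$. If $c>1$ then $z\in\;]x,y[$, and Lemma~\ref{lem: segment to the closest pt} declares $y$ to be the \emph{unique} $\gamma$-closest boundary point to $z$; but $z\in\partial U$ is at $\gamma$-distance $0$ from itself, forcing $y=z$. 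The symmetric case $x-z=c(x-y)$ is identical. Every possibility produces a contradiction, so $d_K$ cannot be differentiable at $x$.
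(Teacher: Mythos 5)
Your proof is correct and follows essentially the same route as the paper: both arguments derive $\langle Dd_{K}(x),x-y\rangle=\gamma(x-y)$ and $\langle Dd_{K}(x),x-z\rangle=\gamma(x-z)$ together with $\gamma^{\circ}(Dd_{K}(x))\le1$, and then invoke the strict convexity of $\gamma$ to force the directions $x-y$ and $x-z$ to coincide (the paper via strict subadditivity at the midpoint of the normalized vectors, you via the equality case of subadditivity for the sum $u+v$). The only difference is cosmetic, and your closing case analysis can be collapsed: once $x-y=c(x-z)$ with $c\ge0$, applying $\gamma$ to both sides and using $\gamma(x-y)=\gamma(x-z)=d_{K}(x)>0$ together with positive homogeneity gives $c=1$, hence $y=z$ immediately.
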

\begin{proof}
The points in the segment $[x,y]$ have $y$ as $d_{K}$-closest point
on $\partial U$. Hence for $0\le t\le\gamma(x-y)$ we have 
\begin{align*}
d_{K}\big(x-\frac{t}{\gamma(x-y)}(x-y)\big) & =\gamma\big(x-\frac{t}{\gamma(x-y)}(x-y)-y\big)\\
 & =\big(1-\frac{t}{\gamma(x-y)}\big)\gamma(x-y)=\gamma(x-y)-t.
\end{align*}
Now suppose to the contrary that $d_{K}$ is differentiable at $x$.
Then by differentiating the above equality with respect to $t$ (and
the similar formula for $z$), we get 
\[
\big\langle Dd_{K}(x),\frac{x-y}{\gamma(x-y)}\big\rangle=1=\big\langle Dd_{K}(x),\frac{x-z}{\gamma(x-z)}\big\rangle.
\]

On the other hand, it is easy to show that $\gamma^{\circ}(Dd_{K}(x))\le1$.
To do this, just note that 
\[
d_{K}(x+tv)-d_{K}(x)\le\gamma(x+tv-x)=t\gamma(v).
\]
Taking the limit as $t\to0^{+}$, we get $\langle Dd_{K}(x),v\rangle\le\gamma(v)$.
We get the desired by (\ref{eq: gen Cauchy-Schwartz 2}).

Now note that there is at most one vector $v$ with $\gamma(v)=1$
such that 
\[
\langle Dd_{K}(x),v\rangle=1.
\]
Since, otherwise for two such vectors $v,w$, we would have $\langle Dd_{K}(x),(\frac{v+w}{2})\rangle=1$.
However, by strict convexity of $\gamma$, and inequality (\ref{eq: gen Cauchy-Schwartz}),
we get 
\begin{align*}
\big\langle Dd_{K}(x),\frac{v+w}{2}\big\rangle & \leq\gamma^{\circ}(Dd_{K}(x))\gamma\big(\frac{v+w}{2}\big)\\
 & <\gamma^{\circ}(Dd_{K}(x))\frac{\gamma(v)+\gamma(w)}{2}=1.
\end{align*}
Which is a contradiction. Therefore $d_{K}$ can not be differentiable
at $x$.
\end{proof}
\begin{defn}
For a strictly convex $K$, the subset of the $d_{K}$-ridge consisting
of the points with more than one $d_{K}$-closest point on $\partial U$,
is denoted by 
\[
R_{K,0}.
\]
\end{defn}
\begin{lem}
\label{lem: cont of y}Suppose $x_{i}\in\overline{U}$ converge to
$x\in\overline{U}$, and $y\in\partial U$ is the unique $d_{K}$-closest
point to $x$. If $y_{i}\in\partial U$ is a (not necessarily unique)
$d_{K}$-closest point to $x_{i}$, then $y_{i}$ converges to $y$.

If $x$ has more than one $d_{K}$-closest point on $\partial U$,
and $y_{i}$ converges to $\tilde{y}\in\partial U$, then $\tilde{y}$
is one of the $d_{K}$-closest points on $\partial U$ to $x$.
\end{lem}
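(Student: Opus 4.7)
The plan is to reduce both statements to a single compactness-plus-continuity observation: along any convergent subsequence $y_{i_k}\to y'$, the identity $\gamma(x_{i_k}-y_{i_k})=d_K(x_{i_k})$ passes to the limit and forces $y'$ to be a $\gamma$-closest point on $\partial U$ to $x$. The two continuity ingredients are already in hand, namely the continuity of $\gamma$ (as a sublinear function on $\mathbb{R}^n$) and the Lipschitz continuity of $d_K$ encoded in \eqref{eq: d_K Lip}. Compactness enters through the fact that $\partial U$ is compact, which follows from $U$ being bounded.

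For the first statement, I would first observe that $(y_i)\subset\partial U$ has a convergent subsequence. It then suffices to show that every subsequential limit $y'$ equals $y$, since this implies convergence of the full sequence to $y$. For such a subsequence $y_{i_k}\to y'$, continuity of $\gamma$ gives $\gamma(x_{i_k}-y_{i_k})\to\gamma(x-y')$, and continuity of $d_K$ (from \eqref{eq: d_K Lip}) gives $d_K(x_{i_k})\to d_K(x)$. Because $\gamma(x_{i_k}-y_{i_k})=d_K(x_{i_k})$ for every $k$, we obtain $\gamma(x-y')=d_K(x)$, so $y'$ is a $\gamma$-closest point on $\partial U$ to $x$. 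The hypothesized uniqueness of $y$ then yields $y'=y$.

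The second statement is precisely the middle step of the above argument with the uniqueness hypothesis dropped: if $y_i\to\tilde{y}\in\partial U$ along the given sequence, then the same passage to the limit produces $\gamma(x-\tilde{y})=\lim_i\gamma(x_i-y_i)=\lim_i d_K(x_i)=d_K(x)$, so $\tilde{y}$ is a $\gamma$-closest point on $\partial U$ to $x$.

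I do not foresee a real obstacle; this lemma is a clean application of compactness of $\partial U$ and the continuity of $\gamma$ and $d_K$, all of which are already available from earlier in the section. The only small point to keep in mind is the standard "every subsequence has a sub-subsequence converging to $y$" device used to upgrade subsequential convergence into convergence of the full sequence in the first part.
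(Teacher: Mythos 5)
Your proof is correct. Both parts reduce, as you say, to passing to the limit in the identity $\gamma(x_{i}-y_{i})=d_{K}(x_{i})$ using the closedness of $\partial U$, the (Lipschitz) continuity of $\gamma$, and the Lipschitz continuity of $d_{K}$ from (\ref{eq: d_K Lip}); the uniqueness hypothesis and the standard sub-subsequence device then upgrade subsequential convergence to convergence of the full sequence in the first part. The paper takes a different, more geometric route: it argues by contradiction, assuming a subsequence of the $y_{i}$ stays outside a ball $B$ around $y$, notes that the compact set $L=x-d_{K}(x)K$ touches $\partial U$ only at $y$ and hence that the slightly enlarged set $x-(d_{K}(x)+\varepsilon)K$ still lies in the open set $U\cup B$, and then derives from the triangle inequality that $d_{K}(x_{i})>d_{K}(x)+\varepsilon/2$, contradicting continuity of $d_{K}$. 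The substance is the same --- both arguments ultimately rest on compactness and the Lipschitz bound (\ref{eq: d_K Lip}) --- but your version is shorter and avoids manipulating the dilated sets $x-rK$, while the paper's version makes explicit the quantitative fact that the $\gamma$-distance from $x$ to $\partial U$ away from $y$ exceeds $d_{K}(x)$ by a definite margin, a picture that recurs in later arguments of the section. Either proof is acceptable.
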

\begin{proof}
Suppose that the claim of the first part does not hold. Then a subsequence
of $y_{i}$, which we still denote it by $y_{i}$, will remain outside
an open ball $B$ around $y$. Now consider the set $L:=x-d_{K}(x)K$
that touches $\partial U$ only at $y$. Since $L$ is a compact set
inside the open set $U\cup B$, a set of the form $x-(d_{K}(x)+\varepsilon)K$
is still inside $U\cup B$. Now, let $\epsilon<\frac{\varepsilon}{2},d_{K}(x)$.
As $x_{i}$'s approach $x$, they will be inside $x-\epsilon K$ eventually.
Therefore 
\[
d_{K}(x)+\varepsilon\le\gamma(x-y_{i})\le\gamma(x-x_{i})+\gamma(x_{i}-y_{i})<\epsilon+\gamma(x_{i}-y_{i}).
\]
Hence 
\[
d_{K}(x)+\frac{\varepsilon}{2}<d_{K}(x)+\varepsilon-\epsilon<d_{K}(x_{i}).
\]
But this contradicts the continuity of $d_{K}$.

Now let us consider the second statement. If the claim fails, then
$\tilde{y}$ is outside the compact set $L$. We can enlarge $L$
to $x-(d_{K}(x)+\varepsilon)K$ so that $\tilde{y}$ is still outside
the enlarged set. Now, let $\epsilon<\frac{\varepsilon}{3},d_{K}(x)$.
As $x_{i}\to x$ and $y_{i}\to\tilde{y}$, they will be respectively
inside $x-\epsilon K$ and $y-\epsilon K$ eventually. Thus 
\[
d_{K}(x)+\varepsilon\le\gamma(x-\tilde{y})\le\gamma(x-x_{i})+\gamma(x_{i}-y_{i})+\gamma(y_{i}-\tilde{y})<2\epsilon+\gamma(x_{i}-y_{i}).
\]
Which gives a contradiction as above.
\end{proof}

\subsection{Regularity of the gauge function}

Remember that $K$ is a compact convex subset of $\mathbb{R}^{n}$
whose interior contains the origin. Suppose that $\partial K$ is
$C^{k,\alpha}$ $(k\ge2\,,\,0\le\alpha\le1)$. Let us show that as
a result, $\gamma$ is $C^{k,\alpha}$ on $\mathbb{R}^{n}-\{0\}$.
Let $r=\rho(\theta)$ for $\theta\in\mathbb{S}^{n-1}$, be the equation
of $\partial K$ in polar coordinates. Then $\rho$ is positive and
$C^{k,\alpha}$. To see this note that locally, $\partial K$ is given
by a $C^{k,\alpha}$ equation $f(x)=0$. On the other hand we have
$x=rX(\theta)$, for some smooth function $X$. Hence we have $f(rX(\theta))=0$;
and the derivative of this expression with respect to $r$ is 
\[
\langle X(\theta),Df(rX(\theta))\rangle=\frac{1}{r}\langle x,Df(x)\rangle.
\]
But this is nonzero since $Df$ is orthogonal to $\partial K$, and
$x$ cannot be tangent to $\partial K$ (otherwise $0$ cannot be
in the interior of $K$, as $K$ lies on one side of its supporting
hyperplane at $x$). Thus we get the desired by the Implicit Function
Theorem. Now, it is straightforward to check that for a nonzero point
in $\mathbb{R}^{n}$ with polar coordinates $(s,\phi)$ we have 
\[
\gamma((s,\phi))=\frac{s}{\rho(\phi)}.
\]
This formula easily gives the smoothness of $\gamma$.
\begin{rem}
The above argument works when $k=1$ too, but we need the extra regularity
for what follows. Also note that as $\partial K=\{\gamma=1\}$ and
$D\gamma\ne0$ by (\ref{eq: g(Dg)=00003D1}), $\partial K$ is as
smooth as $\gamma$.
\end{rem}
Now, suppose in addition that $K$ is strictly convex. Then $\gamma$
is strictly convex too. By Remark 1.7.14 and Theorem 2.2.4 of \citep{MR3155183},
$K^{\circ}$ is also strictly convex and its boundary is $C^{1}$.
Therefore $\gamma^{\circ}$ is strictly convex, and it is $C^{1}$
on $\mathbb{R}^{n}-\{0\}$. Thence by Corollary 1.7.3 of \citep{MR3155183},
for $x\ne0$ we have 
\begin{eqnarray}
D\gamma(x)\in\partial K^{\circ}, &  & D\gamma^{\circ}(x)\in\partial K.\label{eq: g(Dg)=00003D1}
\end{eqnarray}
In particular $D\gamma,D\gamma^{\circ}$ are nonzero on $\mathbb{R}^{n}-\{0\}$.

We also suppose that the smallest principal curvature of $\partial K$
is positive everywhere except possibly at a finite number of points
where it vanishes. Let $\{\mu_{1},\dots,\mu_{m}\}$ be the outward
unit normal to $\partial K$ at these points.

We can show that $\gamma^{\circ}$ is $C^{k,\alpha}$ on $\mathbb{R}^{n}-\{t\mu_{i}:t\ge0\,,\,i=1,\dots,m\}$.
To see this, let $n_{K}:\partial K\to\mathbb{S}^{n-1}$ be the Gauss
map, i.e. $n_{K}(y)$ is the outward unit normal to $\partial K$
at $y$. Then $n_{K}$ is $C^{k-1,\alpha}$ and its derivative is
an isomorphism at the points with positive principal curvatures. Hence
$n_{K}$ is locally invertible with a $C^{k-1,\alpha}$ inverse $n_{K}^{-1}$,
around any point of $\mathbb{S}^{n-1}-\{\mu_{1},\dots,\mu_{m}\}$.
Now note that as it is well known, $\gamma^{\circ}$ equals the support
function of $K$, i.e. 
\[
\gamma^{\circ}(x)=\sup\{\langle x,y\rangle:y\in K\}.
\]
Thus as shown on page 115 of \citep{MR3155183}, for $x\ne0$ we have
\[
D\gamma^{\circ}(x)=n_{K}^{-1}(\frac{x}{|x|}).
\]
Which gives the desired result. As a consequence, since $\partial K^{\circ}=\{\gamma^{\circ}=1\}$
and $D\gamma^{\circ}\ne0$ by (\ref{eq: g(Dg)=00003D1}), $\partial K^{\circ}$
is $C^{k,\alpha}$ except possibly at finitely many points which are
positive multiples of $\mu_{i}$'s.

Let us recall a few more properties of $\gamma,\gamma^{\circ}$. Since
they are positively 1-homogenous, $D\gamma,D\gamma^{\circ}$ are positively
0-homogenous, and $D^{2}\gamma,D^{2}\gamma^{\circ}$ (the latter when
exists) are positively $(-1)$-homogenous, i.e. 
\begin{eqnarray}
\gamma(tx)=t\gamma(x), & D\gamma(tx)=D\gamma(x), & D^{2}\gamma(tx)=\frac{1}{t}D^{2}\gamma(x),\nonumber \\
\gamma^{\circ}(tx)=t\gamma^{\circ}(x), & D\gamma^{\circ}(tx)=D\gamma^{\circ}(x), & D^{2}\gamma^{\circ}(tx)=\frac{1}{t}D^{2}\gamma^{\circ}(x),\label{eq: homog}
\end{eqnarray}
for $x\ne0$ and $t>0$. As a result, using Euler's theorem on homogenous
functions we get 
\begin{eqnarray}
\langle D\gamma(x),x\rangle=\gamma(x), &  & D^{2}\gamma(x)\,x=0,\nonumber \\
\langle D\gamma^{\circ}(x),x\rangle=\gamma^{\circ}(x), &  & D^{2}\gamma^{\circ}(x)\,x=0,\label{eq: Euler formula}
\end{eqnarray}
for $x\ne0$. Note that in both (\ref{eq: homog}), (\ref{eq: Euler formula})
we need to assume $x\ne t\mu_{i}$ for any $t>0$, when dealing with
$D^{2}\gamma^{\circ}$. We also recall the following fact from \citep{MR2336304},
that for $x\ne0$ 
\begin{eqnarray}
D\gamma^{\circ}(D\gamma(x))=\frac{x}{\gamma(x)}, &  & D\gamma(D\gamma^{\circ}(x))=\frac{x}{\gamma^{\circ}(x)}.\label{eq: Dg(Dg0)}
\end{eqnarray}
\begin{rem}
Let us assume for simplicity that $n=2$. As a consequence of (\ref{eq: Euler formula}),
we see that if $x\ne t\mu_{i}$ for any $t>0$, then it is an eigenvector
of $D^{2}\gamma^{\circ}(x)$ with eigenvalue $0$. Since $D^{2}\gamma^{\circ}(x)$
is a symmetric matrix, its other eigenvector can be taken to be $x^{\perp}$.
By Corollary 2.5.2 of \citep{MR3155183} and $(-1)$-homogeneity of
$D^{2}\gamma^{\circ}$, the other eigenvalue of $D^{2}\gamma^{\circ}(x)$
is 
\begin{equation}
\frac{1}{|x|}r_{K}(n_{K}^{-1}(\frac{x}{|x|})).\label{eq: eigvlu of D2g}
\end{equation}
Here $r_{K}$ is the radius of curvature of $\partial K$, i.e. the
reciprocal of its curvature; and $n_{K}^{-1}$ is the inverse of the
Gauss map of $\partial K$. Hence, the eigenvalues of $D^{2}\gamma^{\circ}(x)$
are $0$ and a positive number.
\end{rem}

\section{\label{sec: Regularity d}Regularity of the distance function}

In this section, we are going to study the singularities of the function
$d_{K}$. It is obvious that $d_{K}$ is a Lipschitz function. We
want to characterize the set over which it is more regular. In order
to do that, we need to impose some restrictions on $\partial K,\partial U$.

For the rest of this paper we assume that $n=2$. Let $U\subset\mathbb{R}^{2}$
be a bounded open set, whose boundary is the union of simple closed
Jordan curves consisting of arcs $S_{1},\dots,S_{N}$ which are $C^{k,\alpha}$
$(k\ge2\,,\,0\le\alpha\le1)$ up to their endpoints, satisfying Assumption
\ref{assu: 1} below. Thus, topologically, $U$ is homeomorphic to
the interior of a disk from which, possibly, several disks are removed.
If $S_{i}\cap S_{j}$ is nonempty, in which case it consists of a
single point, we call that point a corner or a vertex of $\partial U$.
A \textbf{nonreentrant} corner of $\partial U$ is a corner whose
opening angle is less than $\pi$. And, a \textbf{reentrant} corner
is a corner with opening angle greater than or equal to $\pi$. If
the angle of a reentrant corner is strictly greater than $\pi$ we
call it a \textbf{strict reentrant} corner. We assume that the opening
angles of the vertices of $\partial U$ are strictly between $0$
and $2\pi$, i.e. there are no \textit{cusps}. As a result, $\partial U$
is locally the graph of a Lipschitz function.
\begin{rem}
We can allow cusps with angle $0$ in Theorem \ref{thm: d_K is C^2},
and arbitrary cusps in Theorems \ref{thm: d_K is C^2, corners}, \ref{thm: ridge}
and \ref{thm: 1-kd>0}. But we need the Lipschitz regularity of $\partial U$
when we deal with the variational problem.
\end{rem}
\begin{assumption}
\label{assu: 1}\textcolor{blue}{{} }Let $y\in S_{i}$ be an interior
point of $S_{i}$, or a reentrant corner. We assume that if the inward
unit normal to $S_{i}$ at $y$ belongs to $\{\mu_{1},\dots,\mu_{m}\}$,
then either the curvature of $S_{i}$ at $y$ is positive, or $S_{i}$
is a line segment.
\end{assumption}
Note that there are at most finitely many points on each $S_{i}$
at which the inward unit normal belongs to $\{\mu_{1},\dots,\mu_{m}\}$,
and the curvature of $S_{i}$ at them is positive. The reason is that
these points are isolated; because the derivative of the inward normal
at them is nonzero, due to the positivity of the curvature (see (\ref{eq: D2g.v'})).

First we assume that all the corners of $\partial U$ are nonreentrant.
We will consider domains with reentrant corners later.

Next, we introduce a new notion of curvature for curves in the plane.
It will be used to study the regularity of $d_{K}$.
\begin{defn}
The \textbf{$\boldsymbol{K}$-curvature} of a $C^{2}$ curve $t\mapsto(x(t),y(t))$
in the plane is 
\[
\kappa_{K}:=\frac{1}{|\nu|^{2}}\langle D^{2}\gamma^{\circ}(\nu)\,\nu',\nu^{\perp}\rangle.
\]
Here, $\nu:=(-y',x')$ is normal to the curve, and $D^{2}\gamma^{\circ}(\nu)\,\nu'$
is the action of the matrix $D^{2}\gamma^{\circ}(\nu)$ on the vector
$\nu'$. We assume that $\nu$ is nonzero and is not a positive multiple
of any of $\mu_{i}$'s. When the curve is a line segment and $\nu\equiv c\mu_{i}$
for some $c>0$, we define $\kappa_{K}\equiv0$.
\end{defn}
It is easy to see that $\kappa_{K}$ does not change under reparametrizations
of the curve, hence it is an intrinsic quantity. Also note that $\langle\nu',\nu^{\perp}\rangle=\kappa|\nu|^{3}$,
where $\kappa$ is the ordinary curvature.
\begin{lem}
We have 
\begin{eqnarray}
 &  & D^{2}\gamma^{\circ}(\nu)\,\nu'=\kappa_{K}\nu^{\perp},\nonumber \\
 &  & \kappa_{K}=\frac{1}{\gamma^{\circ}(\nu)}\langle D^{2}\gamma^{\circ}(\nu)\,\nu',D^{\perp}\gamma^{\circ}(\nu)\rangle.\label{eq: D2g.v'}
\end{eqnarray}
\end{lem}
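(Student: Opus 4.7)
The plan is to exploit the explicit eigenstructure of $D^{2}\gamma^{\circ}(\nu)$ identified in the remark preceding the lemma. Since $n=2$ and $\nu$ is nonzero and not a positive multiple of any $\mu_{i}$, the symmetric $2\times 2$ matrix $D^{2}\gamma^{\circ}(\nu)$ annihilates $\nu$ by Euler's identity \eqref{eq: Euler formula}, and admits $\nu^{\perp}$ as an orthogonal eigenvector with a strictly positive eigenvalue \eqref{eq: eigvlu of D2g}. Because the orthogonal complement of the kernel of a symmetric map is invariant, the image of $D^{2}\gamma^{\circ}(\nu)$ is therefore contained in the line spanned by $\nu^{\perp}$.

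For the first identity, I would decompose $\nu' = a\nu + b\nu^{\perp}$ and apply $D^{2}\gamma^{\circ}(\nu)$. The $\nu$-component is killed, so $D^{2}\gamma^{\circ}(\nu)\cdot\nu'$ is a scalar multiple $c\nu^{\perp}$ of $\nu^{\perp}$. Taking the inner product with $\nu^{\perp}$ gives $\langle D^{2}\gamma^{\circ}(\nu)\cdot\nu',\nu^{\perp}\rangle = c|\nu|^{2}$, which by the very definition of $\kappa_{K}$ forces $c=\kappa_{K}$. The degenerate case where the curve is a line segment with $\nu\equiv c\mu_{i}$ is excluded by the assumption on $\nu$, but even there both sides of the identity would vanish trivially since $\nu'\equiv 0$.

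For the second identity, I would pair the first identity with $D^{\perp}\gamma^{\circ}(\nu)$ to obtain
\[
\langle D^{2}\gamma^{\circ}(\nu)\cdot\nu',\, D^{\perp}\gamma^{\circ}(\nu)\rangle = \kappa_{K}\,\langle \nu^{\perp},\, D^{\perp}\gamma^{\circ}(\nu)\rangle.
\]
Using the elementary planar identity $\langle v^{\perp},w^{\perp}\rangle = \langle v,w\rangle$, the right-hand side equals $\kappa_{K}\langle\nu, D\gamma^{\circ}(\nu)\rangle$, which by Euler's theorem \eqref{eq: Euler formula} simplifies to $\kappa_{K}\,\gamma^{\circ}(\nu)$. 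Since $\gamma^{\circ}(\nu)>0$ whenever $\nu\ne 0$, dividing through produces the claimed formula.

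There is no real obstacle: the lemma is essentially a bookkeeping consequence of the two-dimensional eigendecomposition of $D^{2}\gamma^{\circ}$ together with homogeneity. The only conceptual point to flag is the symmetry argument in the first step, namely that a symmetric matrix with $\nu$ in its kernel must send $\nu^{\perp}$ into its own span, which is what ensures the image $D^{2}\gamma^{\circ}(\nu)\cdot\nu'$ has no $\nu$-component and makes the equality (not merely a projection) hold.
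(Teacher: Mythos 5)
Your argument is correct and follows essentially the same route as the paper: both use the symmetry of $D^{2}\gamma^{\circ}(\nu)$ together with $D^{2}\gamma^{\circ}(\nu)\cdot\nu=0$ to conclude that $D^{2}\gamma^{\circ}(\nu)\cdot\nu'$ lies in the span of $\nu^{\perp}$, identify the scalar as $\kappa_{K}$ from the definition, and then obtain the second identity by pairing with $D^{\perp}\gamma^{\circ}(\nu)$ and invoking $\langle v^{\perp},w^{\perp}\rangle=\langle v,w\rangle$ and Euler's relation $\langle\nu,D\gamma^{\circ}(\nu)\rangle=\gamma^{\circ}(\nu)$. Your explicit handling of the degenerate line-segment case and the positivity of $\gamma^{\circ}(\nu)$ are fine additional checks.
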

\begin{proof}
Since we have $D^{2}\gamma^{\circ}(\nu)\,\nu=0$, and $D^{2}\gamma^{\circ}$
is a symmetric matrix, we get 
\[
\langle(D^{2}\gamma^{\circ}\,\nu')^{\perp},\nu^{\perp}\rangle=\langle D^{2}\gamma^{\circ}\,\nu',\nu\rangle=\langle\nu',D^{2}\gamma^{\circ}\,\nu\rangle=0.
\]
Thus $D^{2}\gamma^{\circ}(\nu)\,\nu'$ is parallel to $\nu^{\perp}$,
and from the definition of $K$-curvature we get $D^{2}\gamma^{\circ}(\nu)\,\nu'=\kappa_{K}\nu^{\perp}$. 

Then by (\ref{eq: Euler formula}) we get
\[
\langle D^{2}\gamma^{\circ}\,\nu',D^{\perp}\gamma^{\circ}\rangle=\langle\kappa_{K}\nu^{\perp},D^{\perp}\gamma^{\circ}\rangle=\kappa_{K}\langle\nu,D\gamma^{\circ}\rangle=\kappa_{K}\gamma^{\circ}(\nu).
\]
\end{proof}
\begin{lem}
\label{lem: sign k}$\kappa_{K}$ has the same sign as the ordinary
curvature $\kappa$. In particular, $\kappa_{K}=0$ if and only if
$\kappa=0$.
\end{lem}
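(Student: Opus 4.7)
My plan is to exploit the eigenstructure of $D^2\gamma^\circ(\nu)$ described in the remark just before the lemma. Assume first that the curve is not the special line-segment case, so $\nu$ is nonzero and not a positive multiple of any $\mu_i$. Then $D^2\gamma^\circ(\nu)$ is a symmetric $2\times 2$ matrix whose eigenvectors can be taken to be $\nu$ and $\nu^\perp$, with eigenvalue $0$ on $\nu$ (by (\ref{eq: Euler formula})) and eigenvalue
\[
\lambda \;=\; \frac{1}{|\nu|}\,r_K\!\left(n_K^{-1}\!\left(\tfrac{\nu}{|\nu|}\right)\right) \;>\; 0
\]
on $\nu^\perp$, thanks to the positivity-of-curvature assumption applied at $n_K^{-1}(\nu/|\nu|)$.

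Now decompose the tangent-to-the-normal vector as $\nu' = a\nu + b\nu^\perp$. The scalar $b$ is pinned down by the ordinary curvature: pairing with $\nu^\perp$ gives $b|\nu|^2 = \langle \nu',\nu^\perp\rangle = \kappa|\nu|^3$, so $b = \kappa|\nu|$. Applying $D^2\gamma^\circ(\nu)$ kills the $a\nu$ component and scales the $\nu^\perp$ component by $\lambda$, yielding $D^2\gamma^\circ(\nu)\cdot\nu' = b\lambda\,\nu^\perp$. Substituting into the definition of $\kappa_K$,
\[
\kappa_K \;=\; \frac{1}{|\nu|^2}\,\langle b\lambda\,\nu^\perp,\nu^\perp\rangle \;=\; b\lambda \;=\; \kappa\,\lambda\,|\nu|.
\]
Since $\lambda>0$ and $|\nu|>0$, $\kappa_K$ has the same sign as $\kappa$, and vanishes exactly when $\kappa$ vanishes.

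It remains to handle the degenerate case of the definition: the curve is a line segment with $\nu \equiv c\mu_i$ for some $c>0$. Here $\kappa_K \equiv 0$ by definition, but also $\nu$ is constant along the line segment, so the ordinary curvature is $\kappa \equiv 0$ as well; thus the equivalence $\kappa_K = 0 \iff \kappa = 0$ persists. No real obstacle arises — the entire argument is a two-line linear-algebra computation once one reads off the eigenvalues of $D^2\gamma^\circ(\nu)$ from the preceding remark; the only care needed is to verify that the hypotheses of the definition of $\kappa_K$ guarantee we are in the regime where that eigenvalue formula is valid.
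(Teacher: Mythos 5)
Your proposal is correct and follows essentially the same route as the paper: decompose $\nu'=a\nu+b\nu^{\perp}$, use the eigenvalue $0$ on $\nu$ and the positive eigenvalue $\lambda$ on $\nu^{\perp}$ from (\ref{eq: eigvlu of D2g}), and conclude $\kappa_{K}=|\nu|\lambda\kappa$. Your explicit treatment of the degenerate line-segment case is a small, harmless addition the paper leaves implicit.
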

\begin{proof}
We can write $\nu'$ as a linear combination of $\nu,\nu^{\perp}$
\[
\nu'=a\nu+b\nu^{\perp}.
\]
Since by (\ref{eq: eigvlu of D2g}) we know that $D^{2}\gamma^{\circ}(\nu)\,\nu^{\perp}=\lambda\nu^{\perp}$
for some $\lambda>0$, using (\ref{eq: Euler formula}),(\ref{eq: D2g.v'})
we get 
\[
\kappa_{K}\nu^{\perp}=D^{2}\gamma^{\circ}(\nu)\,\nu'=\lambda b\nu^{\perp}.
\]
On the other hand $\kappa=\frac{\langle\nu',\nu^{\perp}\rangle}{|\nu|^{3}}=\frac{b}{|\nu|}$.
Therefore 
\[
\kappa_{K}=|\nu|\lambda\kappa.
\]
\end{proof}
\begin{rem}
By (\ref{eq: eigvlu of D2g}), the interpretation of the above formula
is that the $K$-curvature at a point with normal $\nu$, is the ordinary
curvature at that point divided by the ordinary curvature of $\partial K$
at the unique point with outward normal $\nu$.
\end{rem}
\begin{thm}
\label{thm: d_K is C^2}Suppose $K\subset\mathbb{R}^{2}$ is a compact
strictly convex set with zero in its interior, such that $\partial K$
is $C^{k,\alpha}$ $(k\ge2\,,\,0\le\alpha\le1)$, with positive\textcolor{red}{{}
}\textcolor{black}{curvature} except at a finite number of points.
Also suppose that $U\subset\mathbb{R}^{2}$ is a bounded open set,
with piecewise $C^{k,\alpha}$ boundary which satisfies Assumption
\ref{assu: 1}, and only has nonreentrant corners. Let $x\in U-R_{K,0}$,
and let $y=y(x)$ be the unique $d_{K}$-closest point to $x$ on
$\partial U$. If 
\[
\kappa_{K}(y(x))d_{K}(x)\ne1,
\]
then $d_{K}=d_{K}(\cdot,\partial U)$ is $C^{k,\alpha}$ around $x$.
Furthermore, if $\nu$ is an inward normal to $\partial U$ at $y$,
and $\zeta$ is a unit vector orthogonal to the segment $]x,y[$,
we have 
\begin{eqnarray}
 &  & Dd_{K}(x)=\frac{\nu}{\gamma^{\circ}(\nu)},\nonumber \\
 &  & \Delta d_{K}(x)=\frac{-\kappa(y)|\nu|^{3}|D\gamma^{\circ}(\nu)|^{2}}{\gamma^{\circ}(\nu)^{3}(1-\kappa_{K}(y)d_{K}(x))},\nonumber \\
 &  & D_{vw}^{2}d_{K}(x)=\Delta d_{K}(x)\langle v,\zeta\rangle\langle w,\zeta\rangle.\label{eq: laplace d_K}
\end{eqnarray}
Here, $\kappa$ is the ordinary curvature, and $\kappa_{K}$ is the
$K$-curvature of $\partial U$; and $v,w$ are arbitrary vectors
in $\mathbb{R}^{2}$.
\end{thm}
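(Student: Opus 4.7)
The plan is to apply the Inverse Function Theorem to the normal parametrization
\[
\Psi(t,s):=\sigma(t)+sD\gamma^{\circ}(\nu(t)),
\]
where $\sigma$ is a local arc-length parametrization of $\partial U$ near $y$ with $\sigma(t_{0})=y$ (oriented so that $U$ lies on its left, whence $\sigma'=-\nu^{\perp}$), and $\nu(t)$ is the inward unit normal at $\sigma(t)$. First I would check that $\Psi(t_{0},d_{K}(x))=x$. Since the sublevel set $\{w:\gamma(x-w)\le d_{K}(x)\}=x-d_{K}(x)K$ is contained in $\bar{U}$ and touches $\partial U$ only at $y$, its outward normal at $y$, which is a positive multiple of $-D\gamma(x-y)$, must be antiparallel to $\nu$. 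Combined with $\gamma^{\circ}(D\gamma(\cdot))=1$ (from (\ref{eq: g(Dg)=00003D1})) this forces $D\gamma(x-y)=\nu/\gamma^{\circ}(\nu)$, and then (\ref{eq: Dg(Dg0)}) yields $x-y=d_{K}(x)D\gamma^{\circ}(\nu)$, as required.

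Next I would compute the Jacobian of $\Psi$. Using $\sigma'=-\nu^{\perp}$ and the key identity $D^{2}\gamma^{\circ}(\nu)\cdot\nu'=\kappa_{K}\nu^{\perp}$ from (\ref{eq: D2g.v'}),
\[
\partial_{t}\Psi=(s\kappa_{K}-1)\nu^{\perp},\qquad\partial_{s}\Psi=D\gamma^{\circ}(\nu).
\]
Euler's formula (\ref{eq: Euler formula}) gives $\langle D\gamma^{\circ}(\nu),\nu\rangle=\gamma^{\circ}(\nu)>0$, so $D\gamma^{\circ}(\nu)$ is not parallel to $\nu^{\perp}$, whence $\det D\Psi\ne0$ exactly under the hypothesis $\kappa_{K}(y)d_{K}(x)\ne1$. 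Since $\nu$ is $C^{k-1,\alpha}$ and $\gamma^{\circ}$ is $C^{k,\alpha}$ off the exceptional rays, $\Psi$ is a local $C^{k-1,\alpha}$ diffeomorphism. For $x'$ close to $x$, Lemma \ref{lem: cont of y} shows the unique closest point $y(x')$ is close to $y$; the representation derived above, applied to $x'$, then forces $y(x')=\sigma(t(x'))$ and $d_{K}(x')=s(x')$, where $(t,s)=\Psi^{-1}(x')$. This already gives $d_{K}\in C^{k-1,\alpha}$.

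To bootstrap to $C^{k,\alpha}$, I would derive the gradient formula. Differentiating $d_{K}(\Psi(t,s))=s$ in $s$ gives $\langle Dd_{K},D\gamma^{\circ}(\nu)\rangle=1$, and in $t$ gives $\langle Dd_{K},(s\kappa_{K}-1)\nu^{\perp}\rangle=0$, i.e. $Dd_{K}\perp\nu^{\perp}$; combined with $\langle\nu,D\gamma^{\circ}(\nu)\rangle=\gamma^{\circ}(\nu)$ these determine $Dd_{K}(x)=\nu/\gamma^{\circ}(\nu)$. Because $\nu/\gamma^{\circ}(\nu)$ is $C^{k-1,\alpha}$ in $y\in\partial U$ and $y(x)$ is $C^{k-1,\alpha}$ in $x$, the composition $Dd_{K}$ is $C^{k-1,\alpha}$, hence $d_{K}\in C^{k,\alpha}$.

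For the Hessian, I would differentiate the identity $Dd_{K}\circ\Psi=\nu/\gamma^{\circ}(\nu)$. The right side is independent of $s$, so $\partial_{s}$ yields $D^{2}d_{K}(x)\cdot D\gamma^{\circ}(\nu)=0$; since $D\gamma^{\circ}(\nu)$ is a positive multiple of $x-y$, the segment direction lies in the kernel of $D^{2}d_{K}(x)$, and writing $D^{2}d_{K}(x)=\Delta d_{K}(x)\,\zeta\otimes\zeta$ with the unit vector $\zeta\perp(x-y)$ gives the claimed tensorial form. To evaluate $\Delta d_{K}$, I would apply $\partial_{t}$ to the same identity, expand the right-hand side using the quotient rule and Euler's identities, and pair both sides with $\zeta$; substituting the eigenvalue expression (\ref{eq: eigvlu of D2g}) and the relation $\kappa_{K}=|\nu|\lambda\kappa$ from Lemma \ref{lem: sign k} should recover the precise formula in (\ref{eq: laplace d_K}). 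The main technical obstacle is this last step: the trace computation is elementary but requires careful bookkeeping to reduce to the specific form stated.
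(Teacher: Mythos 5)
Your construction is essentially the paper's own proof: the same normal map $(t,s)\mapsto\sigma(t)+s\,D\gamma^{\circ}(\nu(t))$, the same Jacobian determinant $\gamma^{\circ}(\nu)(1-s\kappa_{K})$, the same identification of $d_{K}$ and $y(\cdot)$ with the components of the inverse via Lemma \ref{lem: cont of y}, the same bootstrap from $Dd_{K}=\nu/\gamma^{\circ}(\nu)$ to $C^{k,\alpha}$ regularity, and the same observation that the segment direction spans the kernel of the symmetric matrix $D^{2}d_{K}$, forcing the rank-one form $\Delta d_{K}\,\zeta\otimes\zeta$. The deferred trace computation for $\Delta d_{K}$ is precisely the calculation the paper carries out, and it closes as you expect.

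There is, however, one case your argument does not cover. The hypotheses allow $\partial K$ to have finitely many points of vanishing curvature, and $\gamma^{\circ}$ is only guaranteed to be $C^{k,\alpha}$ away from the rays spanned by the corresponding normals $\mu_{i}$. If $\nu(y)/|\nu(y)|=\mu_{i}$ for some $i$, then $t\mapsto D\gamma^{\circ}(\nu(t))$ need not be differentiable at $t_{0}$, and your assertion that $\Psi$ is a local $C^{k-1,\alpha}$ diffeomorphism has no justification; saying ``off the exceptional rays'' does not dispose of the case where $\nu(y)$ sits on one. The paper handles this by noting that $x-d_{K}(x)K$ is tangent to $\partial U$ at $y$ from inside and its boundary has zero curvature at $y$ (since $\partial K$ has zero curvature at the point with outward normal $\mu_{i}$), so $\kappa(y)\le0$; Assumption \ref{assu: 1} then forces the relevant arc of $\partial U$ to be a line segment, whence $\nu$ is constant near $y$, $D\gamma^{\circ}(\nu(t))$ is constant, $\kappa_{K}\equiv0$ by definition, and $\Psi$ is still smooth with $\det D\Psi=\gamma^{\circ}(\nu)\ne0$. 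You should add this case, and also record (as the paper does) that $y$ cannot be a corner because $\partial K$ is $C^{1}$ and all corners are nonreentrant --- this is what legitimizes the outward-normal comparison in your first step.
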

\begin{proof}
The set $L:=x-d_{K}(x)K$ is inside $\overline{U}$ and touches $\partial U$
only at $y$. Since $\partial K$ is $C^{1}$, $y$ is not a nonreentrant
corner of $\partial U$.

Let $\nu$ be an inward normal to $\partial U$. Note that $\nu(y)$
is also an inward normal to $\partial L$ at $y$. We claim that 
\begin{equation}
\frac{x-y}{\gamma(x-y)}=D\gamma^{\circ}(\nu(y)).\label{eq: K-normal}
\end{equation}
Note that $\xi:=\frac{x-y}{\gamma(x-y)}\in\partial K$. Hence by (\ref{eq: Dg(Dg0)})
we have 
\[
D\gamma^{\circ}(D\gamma(\xi))=\xi.
\]
But $D\gamma(\xi)$, which is nonzero, is an outward normal to $\partial K$
at $\xi$. The reason is that $\partial K=\{\gamma=1\}$, and $\gamma$
increases as we move to the outside of $K$. On the other hand, $-\nu(y)$
is an inward normal to $\partial(x-L)$ at $x-y$, and consequently
an inward normal to $\partial K$ at $\xi$. Hence due to the positive
0-homogeneity of $D\gamma^{\circ}$ we get (\ref{eq: K-normal}).
Note that $\nu$ need not be unit for (\ref{eq: K-normal}) to hold. 

As a consequence of (\ref{eq: K-normal}), we have 
\begin{equation}
x=y(x)+d_{K}(x)\,D\gamma^{\circ}(\nu(y)).\label{eq: parametrize by d_K}
\end{equation}
Note that (\ref{eq: K-normal}) holds even if $x\in R_{K,0}$ and
$y$ is one of the $d_{K}$-closest points to $x$ on $\partial U$
(or even when $y$ is a reentrant corner and $\nu(y)$ is an inward
normal to $\partial L$ at $y$). Thus, formula (\ref{eq: parametrize by d_K})
holds in these cases too.

Let us show that if $\frac{\nu(y)}{|\nu(y)|}\in\{\mu_{1},\dots,\mu_{m}\}$,
then $\kappa(y)\le0$. Thus by Assumption \ref{assu: 1}, $\partial U$
must be a line segment around $y$. To see this, note that $L$ is
tangent to $\partial U$ at $y$, and $L-\{y\}\subset U$. This implies
that the curvature of $\partial L$ at $y$, which is zero, cannot
be less than the curvature of $\partial U$ at $y$.

Let $t\mapsto(y_{1}(t),y_{2}(t))$ for $|t|<\beta$ be a smooth nondegenerate
parametrization of $\partial U$ around $y$, with $(y_{1}(0),y_{2}(0))=y$.
Also suppose that the direction of the parametrization is such that
$\nu(t):=(-y_{2}'(t),y_{1}'(t))$ is an inward normal to $\partial U$.
We can take $\beta$ small enough to ensure that by Assumption \ref{assu: 1}
and the above paragraph, $\nu$ is not a positive multiple of any
of $\mu_{i}$'s unless it is constant. 

Consider the map 
\[
F\,:\,(t,d)\mapsto(y_{1}(t),y_{2}(t))+d\,D\gamma^{\circ}(-y_{2}'(t),y_{1}'(t))
\]
from the open set $(-\beta,\beta)\times(0,\infty)$ into $\mathbb{R}^{2}$.
We have $F(0,d_{K}(x))=x$. We wish to compute $DF$ around this point.
Note that $D\gamma^{\circ}(\nu(t))$ is differentiable with respect
to $t$. Now we have 
\[
DF(t,d)=\begin{bmatrix}y_{1}'+[-y_{2}''D_{11}^{2}\gamma^{\circ}+y_{1}''D_{12}^{2}\gamma^{\circ}]d &  & D_{1}\gamma^{\circ}\\
\\
y_{2}'+[-y_{2}''D_{12}^{2}\gamma^{\circ}+y_{1}''D_{22}^{2}\gamma^{\circ}]d &  & D_{2}\gamma^{\circ}
\end{bmatrix}.
\]
Consequently 
\begin{eqnarray*}
 & \det DF & =-y_{2}'D_{1}\gamma^{\circ}+y_{1}'D_{2}\gamma^{\circ}\\
 &  & \qquad-\,d\,\big[-y_{2}''(D_{1}\gamma^{\circ}D_{12}^{2}\gamma^{\circ}-D_{2}\gamma^{\circ}D_{11}^{2}\gamma^{\circ})+y_{1}''(D_{1}\gamma^{\circ}D_{22}^{2}\gamma^{\circ}-D_{2}\gamma^{\circ}D_{12}^{2}\gamma^{\circ})\big]\\
 &  & =\langle\nu,D\gamma^{\circ}(\nu)\rangle-d\,\langle D^{\perp}\gamma^{\circ}(\nu),D^{2}\gamma^{\circ}(\nu)\,\nu'\rangle\\
 &  & =\gamma^{\circ}(\nu)(1-\kappa_{K}d).
\end{eqnarray*}
Here, we used (\ref{eq: Euler formula}), (\ref{eq: D2g.v'}).

Now if we assume that $\kappa_{K}(y(x_{0}))d_{K}(x_{0})\ne1$ for
some $x_{0}\in U-R_{K,0}$, then $F$ is $C^{k-1,\alpha}$ around
$(0,d_{K}(x_{0}))$ with a $C^{k-1,\alpha}$ inverse. Since $F\,:\,(t,d)\mapsto x$
is invertible in a neighborhood of $(0,d_{K}(x_{0}))$, we have 
\begin{equation}
x=F(t(x),d(x))=y(t(x))+d(x)\,D\gamma^{\circ}(\nu(t(x))).
\end{equation}
We also know that in general 
\[
x=y(x)+d_{K}(x)\,D\gamma^{\circ}(\nu(y(x))).
\]
If we take $x$ close enough to $x_{0}$, then by continuity $y(x),d_{K}(x)$
will be close to $y(x_{0}),d_{K}(x_{0})$ (here we use Lemma \ref{lem: cont of y}
and the fact that $x\notin R_{K,0}$), and by invertibility of $F$
we get 
\begin{eqnarray*}
y(x)=y(t(x)), &  & d_{K}(x)=d(x).
\end{eqnarray*}
As we showed that $x\mapsto(t,d)$ is locally $C^{k-1,\alpha}$, we
obtain that $d_{K}(x)$ and $y(x)$ are also locally $C^{k-1,\alpha}$. 

Note that the above also shows that all points around $x_{0}$ have
a unique $d_{K}$-closest point around $y(x_{0})$, which by continuity
is the unique $d_{K}$-closest point to them on $\partial U$. Thus,
a neighborhood of $x_{0}$ is in $U-R_{K,0}$. This can also be seen
from the fact that $d_{K}$ is differentiable around $x_{0}$.

We can easily compute 
\[
DF^{-1}=\frac{1}{\gamma^{\circ}(\nu)(1-\kappa_{K}d)}\begin{bmatrix}D_{2}\gamma^{\circ} &  & -D_{1}\gamma^{\circ}\\
\\
-y_{2}'-[-y_{2}''D_{12}^{2}\gamma^{\circ}+y_{1}''D_{22}^{2}\gamma^{\circ}]d &  & y_{1}'+[-y_{2}''D_{11}^{2}\gamma^{\circ}+y_{1}''D_{12}^{2}\gamma^{\circ}]d
\end{bmatrix}.
\]
Using (\ref{eq: D2g.v'}) we can simplify this as 
\[
DF^{-1}=\frac{1}{\gamma^{\circ}(\nu)(1-\kappa_{K}d)}\begin{bmatrix}-D^{\perp}\gamma^{\circ}\\
\nu+d(D^{2}\gamma^{\circ}\,\nu')^{\perp}
\end{bmatrix}=\begin{bmatrix}-\frac{D^{\perp}\gamma^{\circ}}{\gamma^{\circ}(\nu)(1-\kappa_{K}d)}\\
\frac{\nu}{\gamma^{\circ}(\nu)}
\end{bmatrix}.
\]
Which implies 
\begin{eqnarray*}
 &  & Dd_{K}(x)=\frac{\nu}{\gamma^{\circ}(\nu)}=\frac{\nu(t(x))}{\gamma^{\circ}(\nu(t(x)))},\\
 &  & Dt(x)=-\frac{D^{\perp}\gamma^{\circ}(\nu)}{\gamma^{\circ}(\nu)(1-\kappa_{K}(y)d_{K}(x))}.
\end{eqnarray*}
Consequently, since $\nu,t$ are $C^{k-1,\alpha}$ functions and $\gamma^{\circ}$
is $C^{k,\alpha}$ on the image of $\nu$ (otherwise $\frac{\nu}{\gamma^{\circ}(\nu)}$
is constant), $d_{K}$ is $C^{k,\alpha}$. 

By differentiating $d_{K}$ one more time, for $i=1,2$ we get 
\[
D_{ii}d_{K}=\Big[\frac{\nu_{i}'}{\gamma^{\circ}(\nu)}-\frac{\nu_{i}\langle D\gamma^{\circ}(\nu),\nu'\rangle}{\gamma^{\circ}(\nu)^{2}}\Big]D_{i}t.
\]
Hence
\begin{eqnarray*}
 & \Delta d_{K} & =[\nu_{1}'\gamma^{\circ}(\nu)-\nu_{1}\langle D\gamma^{\circ}(\nu),\nu'\rangle]\frac{D_{2}\gamma^{\circ}(\nu)}{\gamma^{\circ}(\nu)^{3}(1-\kappa_{K}d_{K})}\\
 &  & \qquad-\,[\nu_{2}'\gamma^{\circ}(\nu)-\nu_{2}\langle D\gamma^{\circ}(\nu),\nu'\rangle]\frac{D_{1}\gamma^{\circ}(\nu)}{\gamma^{\circ}(\nu)^{3}(1-\kappa_{K}d_{K})}\\
 &  & =-\frac{\langle D^{\perp}\gamma^{\circ},\nu'\rangle\gamma^{\circ}(\nu)-\langle D\gamma^{\circ},\nu'\rangle\langle D^{\perp}\gamma^{\circ},\nu\rangle}{\gamma^{\circ}(\nu)^{3}(1-\kappa_{K}d_{K})}.
\end{eqnarray*}
Now as $\gamma^{\circ}(\nu)=\langle D\gamma^{\circ},\nu\rangle$,
the numerator of the above fraction can be written as 
\[
\langle v^{\perp},\nu'\rangle\langle v,\nu\rangle-\langle v,\nu'\rangle\langle v^{\perp},\nu\rangle,
\]
where $v:=D\gamma^{\circ}(\nu)$. Since $v,v^{\perp}$ are orthogonal
and have the same length, this expression is nothing but $|v|^{2}\langle\nu',\nu^{\perp}\rangle$.
Therefore using the fact that $\langle\nu',\nu^{\perp}\rangle=\kappa|\nu|^{3}$
we get the desired result.

Now, let $\tilde{\xi}:=\frac{x-y(x)}{|x-y(x)|}$, and $\zeta:=-\tilde{\xi}^{\perp}$.
Then as $Dd_{K}$ is constant along the segment $]x,y(x)[$, we have
$D_{\tilde{\xi}\tilde{\xi}}^{2}d_{K}(x)=D_{\tilde{\xi}\zeta}^{2}d_{K}(x)=0$.
Also as $\tilde{\xi},\zeta$ form an orthonormal basis, we have 
\[
\Delta d_{K}(x)=D_{\tilde{\xi}\tilde{\xi}}^{2}d_{K}(x)+D_{\zeta\zeta}^{2}d_{K}(x)=D_{\zeta\zeta}^{2}d_{K}(x).
\]
Therefore, by changing the coordinates from the orthonormal basis
$\tilde{\xi},\zeta$ to the standard basis, we get 
\[
D^{2}d_{K}(x)=\begin{bmatrix}\tilde{\xi}_{1} & \zeta_{1}\\
\tilde{\xi}_{2} & \zeta_{2}
\end{bmatrix}\begin{bmatrix}0 & 0\\
0 & \Delta d_{K}(x)
\end{bmatrix}\begin{bmatrix}\tilde{\xi}_{1} & \tilde{\xi}_{2}\\
\zeta_{1} & \zeta_{2}
\end{bmatrix}.
\]
By applying both sides of this equation to two vectors $v=(v_{1},v_{2})$,
$w=(w_{1},w_{2})$, we obtain (\ref{eq: laplace d_K}).
\end{proof}

\subsection{Domains with reentrant corners}

Now, we allow some of the vertices of $\partial U$ to be reentrant
corners. The main difference with the previous case, is that reentrant
corners can be the $d_{K}$-closest point on $\partial U$ to some
points in $U$. Let us first introduce a new notion.
\begin{defn}
The inward \textbf{$\boldsymbol{K}$-normal} at a point $y\in S_{i}\subset\partial U$
is 
\[
\nu_{K}(y):=D\gamma^{\circ}(\nu(y)),
\]
where $\nu(y)$ is an inward normal to $S_{i}$ at $y$.
\end{defn}
The value of $\nu_{K}$ is independent of the length of $\nu$ due
to the 0-homogeneity of $D\gamma^{\circ}$. Also, we have $\gamma(\nu_{K})=1$
and 
\[
\langle\nu_{K},\nu\rangle=\gamma^{\circ}(\nu)>0,
\]
by (\ref{eq: Euler formula}), (\ref{eq: g(Dg)=00003D1}). In particular,
$\nu_{K}$ is really pointing inward. Note that at a corner we have
two inward $K$-normals.

The motivation for this definition is that by (\ref{eq: K-normal}),
$\nu_{K}(y)$ is the direction along which points inside $U$ and
close to $y$ have $y$ as the $d_{K}$-closest point on $\partial U$,
if $y$ is the $d_{K}$-closest point to any point inside $U$. 
\begin{thm}
\label{thm: d_K is C^2, corners}Suppose $K\subset\mathbb{R}^{2}$
is a compact strictly convex set with zero in its interior, such that
$\partial K$ is $C^{k,\alpha}$ $(k\ge2\,,\,0\le\alpha\le1)$, with
positive\textcolor{red}{{} }\textcolor{black}{curvature} except at a
finite number of points. Also suppose that $U\subset\mathbb{R}^{2}$
is a bounded open set, with piecewise $C^{k,\alpha}$ boundary which
satisfies Assumption \ref{assu: 1}. Let $x\in U-R_{K,0}$, and let
$y=y(x)$ be the unique $d_{K}$-closest point to $x$ on $\partial U$.
If $y$ is not a reentrant corner and 
\[
\kappa_{K}(y(x))d_{K}(x)\ne1,
\]
then $d_{K}=d_{K}(\cdot,\partial U)$ is $C^{k,\alpha}$ around $x$.
Furthermore, $Dd_{K}$,$D^{2}d_{K}$ at $x$ are given by (\ref{eq: laplace d_K}).

If $y$ is a strict reentrant corner and $x-y$ is not parallel to
one of the inward $K$-normals at $y$, then 
\[
d_{K}(z)=\gamma(z-y),
\]
for $z$ close to $x$. Thus $d_{K}$ is $C^{k,\alpha}$ around $x$.
And, if $x-y$ is parallel to one of the inward $K$-normals at $y$
and $\kappa_{K}(y)d_{K}(x)\neq1$, where $\kappa_{K}$ is the $K$-curvature
of the corresponding boundary part, then $d_{K}$ is $C^{1,1}$ around
$x$ (but not $C^{2}$ in general).

Finally, if $y$ is a non-strict reentrant corner and $d_{K}(x)\neq\frac{1}{\kappa_{K,1}}\,,\,\frac{1}{\kappa_{K,2}}$,
where $\kappa_{K,1},\kappa_{K,2}$ are the $K$-curvatures at $y$
from different sides, then $d_{K}$ is $C^{1,1}$ around $x$ (but
not $C^{2}$ in general).
\end{thm}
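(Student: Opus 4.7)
The plan is to handle the four assertions in turn, reusing as much as possible from the proof of Theorem \ref{thm: d_K is C^2}. For the first assertion (where $y$ is not a reentrant corner) essentially no new work is required: the proof of Theorem \ref{thm: d_K is C^2} is local around $y$ and $x$, so the presence of reentrant corners elsewhere on $\partial U$ is irrelevant. One parametrizes the arc $S_i$ containing $y$, defines the map $F(t,d) = (y_1(t),y_2(t)) + d\, D\gamma^{\circ}(\nu(t))$, and invokes the already-computed Jacobian $\gamma^{\circ}(\nu)(1 - \kappa_K d)$ to invert $F$; the formulas in (\ref{eq: laplace d_K}) then follow exactly as before.

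For the second assertion, I parametrize each of the two arcs $S_i, S_j$ meeting at the strict reentrant corner $y$ by arc length $\sigma_i(t), \sigma_j(t)$, with unit tangents $\tau_i, \tau_j$ pointing away from $y$ into the respective arcs. The right-derivative of $t \mapsto \gamma(z - \sigma_i(t))$ at $t = 0$ equals $-\langle D\gamma(z-y), \tau_i\rangle$; using (\ref{eq: Dg(Dg0)}) together with $\nu_i \perp \tau_i$, this vanishes exactly when $z - y$ is a positive multiple of $\nu_{K,i}$. The strict reentrance of the corner together with the hypothesis that $x - y$ is parallel to no inward $K$-normal places $x - y$ in an open wedge in which $\langle D\gamma(x-y), \tau_i\rangle < 0$ and $\langle D\gamma(x-y), \tau_j\rangle < 0$; by continuity the same inequalities hold for all $z$ near $x$, forcing $y$ to be a strict local minimum of $\gamma(z - \cdot)$ on both arcs, and a compactness argument rules out competitors on the rest of $\partial U$. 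Thus $d_K(z) = \gamma(z - y)$ locally, which is $C^{k,\alpha}$ by the discussion of Section~2.2.

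For the third assertion, assume $x - y$ is parallel to $\nu_{K,i}$. I partition a neighborhood of $x$ by the smooth curve of points $z$ with $z - y$ parallel to $\nu_{K,i}$. On the side of this interface that lies in the open wedge of the second assertion, one has $d_K(z) = \gamma(z - y)$. On the other side the $\gamma$-closest point slides onto $S_i$ away from $y$, and the first assertion applies (it is here that $\kappa_K(y) d_K(x) \ne 1$ enters, to keep $F$ invertible), giving $d_K$ as a $C^{k,\alpha}$ function with $Dd_K(z) = \nu(y^{\ast})/\gamma^{\circ}(\nu(y^{\ast}))$. The two one-sided gradients agree at the interface, since $D\gamma(z - y) = D\gamma(\nu_{K,i}) = \nu_i/\gamma^{\circ}(\nu_i)$ by (\ref{eq: Dg(Dg0)}), while the first-side gradient tends to the same vector as $y^{\ast} \to y$. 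As each piece is individually $C^{k,\alpha}$ with matching gradients along the interface, the glued function is $C^{1,1}$; it is not $C^2$ in general because the Hessians from the two sides differ. The fourth assertion is handled analogously: at a non-strict reentrant corner the inward normals from the two sides coincide, so the wedge of the second assertion degenerates and every $z$ near $y$ has its $\gamma$-closest point on $S_i$ or $S_j$, with the transition occurring exactly when $z - y$ is parallel to the common inward $K$-normal. The hypothesis $d_K(x) \ne 1/\kappa_{K,1}, 1/\kappa_{K,2}$ ensures invertibility of the parametrizations on both sides, and matching of the gradients $\nu_i/\gamma^{\circ}(\nu_i) = \nu_j/\gamma^{\circ}(\nu_j)$ along the interface again yields $C^{1,1}$ regularity.

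The main obstacle is the second assertion: one must couple the first-order analysis (the directional derivative vanishes precisely along $K$-normal directions) with a global compactness estimate excluding distant competitors, and verify that the wedge cut out by the two strict inequalities is both open and nonempty precisely when the corner is strictly reentrant. Once this is in place the third and fourth assertions reduce to patching Theorem \ref{thm: d_K is C^2} together with the formula $d_K = \gamma(\cdot - y)$ across interfaces defined by the inward $K$-normal directions.
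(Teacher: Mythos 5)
Your overall decomposition (four cases, reuse of the map $F(t,d)$ from Theorem \ref{thm: d_K is C^2}, gradient matching across the interface via (\ref{eq: Dg(Dg0)})) coincides with the paper's, but your treatment of the second assertion is a genuinely different route. The paper shows directly that the set $L_{z}:=z-\gamma(z-y)K$ stays inside $\bar{U}$ for $z$ near $x$, by a tangent-line/half-line containment argument; you instead do a first-order analysis of $t\mapsto\gamma(z-\sigma_{i}(t))$ along each arc and observe that the hypothesis ``$x-y$ not parallel to an inward $K$-normal'' is exactly the condition $\langle D\gamma(x-y),\tau_{i}\rangle<0$ for both arcs (since the inward normal $\nu$ to $\partial L$ at $y$ is a positive multiple of $D\gamma(x-y)$ and must lie in the normal cone at the corner). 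This is cleaner and more elementary, and it does close: the strict inequalities persist for $z$ near $x$, a uniform bound on the second derivative of $t\mapsto\gamma(z-\sigma_{i}(t))$ (available because $\gamma$ is $C^{2}$ away from $0$ and the arcs are $C^{2}$ up to their endpoints) upgrades the sign of the first derivative to a \emph{uniform} strict local minimum, and compactness handles the rest of $\partial U$ since $y$ is the unique $\gamma$-closest point to $x$. Two small points to tidy: the derivative also vanishes when $z-y$ is a positive multiple of $D\gamma^{\circ}(-\nu_{i})$, which you should note is irrelevant for $z\in U$ near $x$; and you should record that the wedge is open and nonempty precisely because $D\gamma^{\circ}$ is injective on the unit circle, so $\nu_{1}\ne\nu_{2}$ forces $\nu_{K,1}\ne\nu_{K,2}$.

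In the third assertion there is a genuine gap: the sentence ``the $\gamma$-closest point slides onto $S_{i}$ away from $y$, and the first assertion applies'' conceals the hardest part of the argument. To invoke the machinery of Theorem \ref{thm: d_K is C^2} on the half-neighborhood $B$ you must first prove that points of $B$ have a \emph{unique} $\gamma$-closest point lying on $S_{i}-\{y\}$, which requires (i) excluding closest points on the other arc $S_{j}$ (via injectivity of $D\gamma^{\circ}$ and (\ref{eq: K-normal})), (ii) excluding $y$ itself, and above all (iii) showing $R_{K,0}\cap B=\emptyset$. Step (iii) is not a consequence of the invertibility of $F$: the paper proves it by a separate limiting argument (equation (\ref{eq: 1 in reentrant})) which produces the factor $(1-\kappa_{K}(y)d_{K}(x))\,\nu_{1}^{\perp}$, so the hypothesis $\kappa_{K}(y)d_{K}(x)\neq1$ is used a second time here, not only ``to keep $F$ invertible'' as you state. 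Without (iii) the first assertion simply cannot be applied to points of $B$, since its hypotheses include membership in $U-R_{K,0}$. Relatedly, ``each piece is individually $C^{k,\alpha}$ with matching gradients'' does not by itself yield $C^{1,1}$: you need the gradients to extend continuously to the interface from the $B$ side (the paper proves uniform continuity of $Dd_{K}$ on $B$) and the Hessian to stay bounded up to the interface, which follows from (\ref{eq: laplace d_K}) and the trace identity $\mathrm{tr}[(D^{2}d_{K})^{2}]=(\Delta d_{K})^{2}$ together with $1-\kappa_{K}d_{K}$ being bounded away from zero on $\bar{B}$. These are all fillable with the paper's techniques, but as written the third (and hence fourth) assertion is not proved.
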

\begin{proof}
If $y$ is not a reentrant corner, the proof is the same as in Theorem
\ref{thm: d_K is C^2}; so we assume that $y\in S_{1}\cap S_{2}$
is a reentrant corner. Consider the set $L:=x-d_{K}(x)K$ which is
inside $\overline{U}$ and touches $\partial U$ only at $y$. Note
that $y\in\partial L$. Let $\nu$ be the inward unit normal to $\partial L$
at $y$.

First suppose that $y$ is a strict reentrant corner. Let $\nu_{1},\nu_{2}$
be the inward unit normals to $S_{1},S_{2}$ at $y$. Then, $\nu$
must lie between $\nu_{1},\nu_{2}$ or coincide with one of them,
otherwise $L$ would intersect the exterior of $U$. If $x-y$ is
not parallel to one of the inward $K$-normals at $y$, then $\nu\neq\nu_{1},\nu_{2}$
by (\ref{eq: K-normal}). We need to show that 
\[
d_{K}(z)=\gamma(z-y),
\]
for $z$ close to $x$. 

To prove this, it is enough to show that $L_{z}:=z-\gamma(z-y)K$
is a subset of $\overline{U}$ for $z$ close to $x$. Suppose to
the contrary that there exists a sequence $z_{i}\to x$ such that
$L_{z_{i}}$ intersects $\mathbb{R}^{2}-\overline{U}$ at $y_{i}$.
Due to the compactness of $K$ we can assume that $y_{i}$ converges
to some limit. But that limit must belong to $L$, and it cannot be
an interior point of $U$; hence we must have $y_{i}\to y$. On the
other hand, $L_{z_{i}}$ lies on one side of the tangent line to $L_{z_{i}}$
at $y$, and that line is close to $l_{y}$, the tangent line to $L$
at $y$. Now, consider two half-lines with vertex $y$ which are between
$l_{y}$ and $S_{1},S_{2}$ respectively. Then for large enough $i$,
$L_{z_{i}}$ and $L$ are on the same side of the union of these two
half-lines. But this contradicts the fact that $y_{i}$ is in the
intersection of a neighborhood of $y$ and $\mathbb{R}^{2}-\overline{U}$.

Next consider the case where $\nu=\nu_{1}$. Then $x-y$ is parallel
to the $K$-normal to $S_{1}$ at $y$. Note that if $\nu_{1}$ coincides
with one of the $\mu_{i}$'s, then $S_{1}$ must be a line segment
by Assumption \ref{assu: 1}; otherwise $L$ cannot be tangent to
$S_{1}$ at $y$ and lies inside $\overline{U}$. Consider a small
ball around $x$ divided by $l$, the line passing through $x,y$.
Denote by $B$ the open side of the ball which is in the same side
of $l$ as $S_{1}$. First note that by Lemma \ref{lem: cont of y},
the $d_{K}$-closest points on $\partial U$ to points in $B$ must
be close to $y$; so they either lie on $S_{1}$ or $S_{2}$. But
if $B$ is small enough, those $d_{K}$-closest points cannot belong
to $S_{2}$. 

To see this, suppose to the contrary that $w_{i}\in S_{2}$ is $d_{K}$-closest
to $z_{i}\in B$, and $z_{i}\to x$. First let us assume that $w_{i}\in S_{2}-\{y\}$.
Then, by Lemma \ref{lem: cont of y} we know that $w_{i}\to y$. Also
by (\ref{eq: K-normal}) we have 
\[
\frac{z_{i}-w_{i}}{\gamma(z_{i}-w_{i})}=D\gamma^{\circ}(\nu(w_{i})).
\]
The left hand side of this equality converges to $\frac{x-y}{\gamma(x-y)}$
which equals $D\gamma^{\circ}(\nu_{1})$, while the right hand side
converges to $D\gamma^{\circ}(\nu_{2})$. Now, Corollary 1.7.3 of
\citep{MR3155183} says that for some unit vector $\tilde{\nu}$,
$D\gamma^{\circ}(\tilde{\nu})$ is the unique point on $\partial K$
which has $\tilde{\nu}$ as the outward unit normal. Since $\partial K$
is $C^{1}$, this implies that $D\gamma^{\circ}$ is injective on
the unit circle; thus we arrive at a contradiction.

Now let us show that $w_{i}$ cannot equal $y$ for any $i$. If this
happens, the definition of $B$ and (\ref{eq: K-normal}) imply that
the inward unit normal to $L_{z_{i}}$ at $y$, $\nu(w_{i})$, lies
between $\nu_{1}$ and $-\nu_{1}^{\perp}$. The reason is that $D\gamma^{\circ}$
is orientation preserving on the unit circle due to the convexity
of $\gamma^{\circ}$. In other words 
\[
\langle D\gamma^{\circ}(\nu(w_{i}))-D\gamma^{\circ}(\nu_{1}),\nu(w_{i})-\nu_{1}\rangle\ge0.
\]
Now, if $\nu(w_{i})=\nu_{1}$, then $x,y,z_{i}$ must be collinear
by (\ref{eq: parametrize by d_K}), which is impossible by the definition
of $B$; and if $\nu(w_{i})\ne\nu_{1}$, then $L_{z_{i}}$ would intersect
the exterior of $U$.

Thus far, we have shown that $B$ can be taken to be small enough
so that the $d_{K}$-closest points on $\partial U$ to points in
$B$ are on $S_{1}-\{y\}$. Let us also show that if $B$ is small
enough, then $R_{K,0}$ does not intersect it. Suppose to the contrary
that there is a sequence $z_{i}\to x$ of elements of $B$ such that
they all have more than one $d_{K}$-closest points on $S_{1}-\{y\}$.
Let $w_{i,1},w_{i,2}$ be two distinct $d_{K}$-closest points to
$z_{i}$. First note that for this to happen, $S_{1}$ cannot be a
line segment; since $K$ is strictly convex. Hence we can assume that
$\nu_{1}$ is not one of the $\mu_{j}$'s. Now, by (\ref{eq: parametrize by d_K})
we have 
\begin{eqnarray*}
z_{i}-w_{i,1}=d_{K}(z_{i})\,D\gamma^{\circ}(\nu(w_{i,1})), &  & z_{i}-w_{i,2}=d_{K}(z_{i})\,D\gamma^{\circ}(\nu(w_{i,2})).
\end{eqnarray*}
If we subtract these two equations we get 
\begin{equation}
w_{i,1}-w_{i,2}=-d_{K}(z_{i})\,[D\gamma^{\circ}(\nu(w_{i,1}))-D\gamma^{\circ}(\nu(w_{i,2}))].\label{eq: 1 in reentrant}
\end{equation}
Let $t\mapsto y(t)$ be a smooth nondegenerate parametrization of
$S_{1}$ around $y$ with $y(0)=y$. Then there are $t_{i,j}$ such
that $w_{i,j}=y(t_{i,j})$. Since $w_{i,1},w_{i,2}\to y$, we have
$t_{i,1},t_{i,2}\to0^{+}$. As $D\gamma^{\circ}$ is differentiable
at $\nu_{1}$, we can divide by $t_{i,1}-t_{i,2}$ and let $i\to\infty$
in (\ref{eq: 1 in reentrant}) to get 
\[
y'(0)=-d_{K}(x)[D^{2}\gamma^{\circ}(\nu_{1})\,\nu'(0)].
\]
By using (\ref{eq: D2g.v'}) and the fact that $y'(0)=-\nu_{1}^{\perp}$,
we get 
\[
(1-\kappa_{K}(y)d_{K}(x))\,\nu_{1}^{\perp}=0.
\]
Which is a contradiction.

We assumed that $1-\kappa_{K}(y)d_{K}(x)\neq0$, where $\kappa_{K}(y)$
is the $K$-curvature of $S_{1}$ at $y$. Let us also assume that
$B$ is small enough so that for $z\in B$ we have $1-\kappa_{K}(y(z))d_{K}(z)\neq0$.
Then, since $R_{K,0}\cap B=\emptyset$, we can repeat the proof of
Theorem \ref{thm: d_K is C^2} to deduce that $d_{K}$ is at least
$C^{2}$ on $B$. We also have $Dd_{K}(z)=\frac{\nu(y(z))}{\gamma^{\circ}(\nu(y(z)))}$
for $z\in B$. 

Next, let us show that if $B$ is small enough, the points on the
segment $l\cap\partial B$ have $y$ as the only $d_{K}$-closest
point on $\partial U$. This is obvious for points in $]x,y[$ by
Lemma \ref{lem: segment to the closest pt}; so we only need to consider
points $z$ on $l\cap\partial B$ such that $x\in]z,y[$. Take a sequence
$z_{i}\in B$ that converges to $z$. Then we can find points $x_{i}\in]z_{i},y(z_{i})[$
such that $x_{i}\to x$. Since we have $y(z_{i})=y(x_{i})\to y$,
$y$ is one of the $d_{K}$-closest points on $\partial U$ to $z$
by Lemma \ref{lem: cont of y}. Thus $y$ is the only $d_{K}$-closest
point on $\partial U$ to points in $]z,y[$; and we can make $B$
small enough to have the aforementioned property. We also make $B$
small enough so that $1-\kappa_{K}d_{K}\ne0$ on $\bar{B}$.

Now we claim that $Dd_{K}$ is uniformly continuos on $B$. Thus it
admits continuous extension to $\bar{B}$. It is enough to show that
$Dd_{K}(z)$ has a limit as $z$ approaches $\partial B$. Since we
can make $B$ smaller, we only need to consider $l\cap\partial B$.
Suppose $z_{i}\in B$ converge to $z$ on $l\cap\partial B$. Then
$y(z_{i})\to y$ and 
\[
Dd_{K}(z_{i})\to\frac{\nu_{1}}{\gamma^{\circ}(\nu_{1})}.
\]
Also note that $d_{K}$ is a linear function on $l\cap\partial B$,
and its derivative along $l$ is precisely the projection of $\frac{\nu_{1}}{\gamma^{\circ}(\nu_{1})}$
onto $l$. Therefore, $d_{K}$ is $C^{1}$ on $\bar{B}$.

Let $z\in l\cap\partial B$. Then $Dd_{K}(z)=\frac{\nu_{1}}{\gamma^{\circ}(\nu_{1})}$
from the side of $B$. Let us compute $Dd_{K}(z)$ from the other
side of $l$. We know that on the other side of $l$, $d_{K}(\cdot)=\gamma(\cdot-y)$.
Hence $Dd_{K}(z)=D\gamma(z-y)$. Now we have $z-y=d_{K}(z)D\gamma^{\circ}(\nu_{1})$
by Lemma (\ref{eq: parametrize by d_K}); so by (\ref{eq: homog}),
(\ref{eq: Dg(Dg0)}) we get 
\[
Dd_{K}(z)=D\gamma(d_{K}(z)D\gamma^{\circ}(\nu_{1}))=D\gamma(D\gamma^{\circ}(\nu_{1}))=\frac{\nu_{1}}{\gamma^{\circ}(\nu_{1})}.
\]
Therefore $Dd_{K}$ is continuous on $l\cap\partial B$ from both
sides, and thence $d_{K}$ is $C^{1}$ around $x$.

As $d_{K}$ is $C^{2}$ on both sides of $l\cap\partial B$, to show
that it is $C^{1,1}$ around $x$, it is enough to show that $D^{2}d_{K}$
remains bounded as we approach $l\cap\partial B$ from either side.
This is obvious on the side of $l$ where $d_{K}(\cdot)=\gamma(\cdot-y)$.
Let us consider the side where $B$ lies. It suffices to show that
\[
\textrm{tr}[(D^{2}d_{K})^{2}]=(D_{11}^{2}d_{K})^{2}+(D_{22}^{2}d_{K})^{2}+2(D_{12}^{2}d_{K})^{2}
\]
has limit as we approach $l\cap\partial B$. As shown in the proof
of Theorem \ref{thm: d_K is C^2}, the matrix of $D^{2}d_{K}$ in
the standard basis is similar to the matrix 
\[
\begin{bmatrix}0 & 0\\
0 & \Delta d_{K}
\end{bmatrix}.
\]
Since, the trace of similar matrices are the same, we get 
\[
\textrm{tr}[(D^{2}d_{K})^{2}]=(\Delta d_{K})^{2}.
\]
Now if $z_{i}\in B$ approach $l\cap\partial B$, then $y(z_{i})\to y$
and $\nu(y(z_{i}))\to\nu_{1}$. Thus, as $1-\kappa_{K}d_{K}\ne0$
on $\bar{B}$, $\Delta d_{K}(z_{i})$ has a limit by (\ref{eq: laplace d_K}).

To see that $d_{K}$ is not $C^{2}$ around $x$ in general, we can
compute $\Delta d_{K}$ from both sides of $l$, and see that in simple
examples they do not agree on $l$. For example, when $K$ is the
unit disk around the origin and $S_{1}$ is a line segment, we see
this phenomenon.

When $y$ is a non-strict reentrant corner, the argument is similar
to the above.
\end{proof}
\begin{thm}
Suppose $K,U$ satisfy the same assumptions as in Theorem \ref{thm: d_K is C^2, corners}.
Let $y\in S_{i}\subset\partial U$, and suppose that it is not a corner.
Also suppose that $S_{i}$ is a line segment if $\nu(y)=c\mu_{j}$
for some $c>0$. Then for some $r>0$ we have 
\[
B_{r}(y)\cap R_{K}=\emptyset.
\]
Furthermore, $d_{K}$ is at least $C^{2}$ up to $\partial U\cap B_{r}(y)$.
\end{thm}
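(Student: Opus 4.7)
The plan is to reduce the statement to Theorem \ref{thm: d_K is C^2, corners} by verifying its hypotheses at every $x\in B_r(y)\cap U$ for $r$ small enough: namely, that $x\notin R_{K,0}$, that the unique closest point $y(x)$ is not a reentrant corner, and that $\kappa_K(y(x))d_K(x)\ne 1$. Throughout I use the bounds $\frac{1}{C}|\cdot|\le\gamma(\cdot)\le\frac{1}{c}|\cdot|$ recorded at the start of the section.

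First I localize closest points. Any $\gamma$-closest point $y'$ to $x\in B_r(y)$ satisfies $|x-y'|\le C\gamma(x-y')=C\,d_K(x)\le \frac{C}{c}|x-y|$, so $|y-y'|\le(1+C/c)|y-x|$. Hence for small $r$ every closest point lies on $S_i$ away from its endpoints and any corner of $\partial U$; in particular none is a reentrant corner. Since $d_K(x)\to 0$ as $x\to y$ while $\kappa_K$ stays bounded near $y$ on $S_i$ (continuous where it is defined, and identically zero if $S_i$ is a line segment by hypothesis), the condition $\kappa_K(y(x))d_K(x)<1$ also holds after shrinking $r$.

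The crux is uniqueness ($x\notin R_{K,0}$). I split by whether $\nu(y)$ is a critical normal. If $\nu(y)\ne c\mu_j$ for any $j$ and $c>0$, then $\gamma^\circ$ is $C^{k,\alpha}$ on a conic neighborhood of $\nu(y)$, and I take a $C^{k,\alpha}$ parametrization $t\mapsto y(t)$ of $S_i$ with $y(0)=y$. The map
\[
F(t,d):=y(t)+d\,D\gamma^\circ(\nu(t))
\]
from the proof of Theorem \ref{thm: d_K is C^2} has $\det DF(0,0)=\gamma^\circ(\nu(0))\ne 0$, hence is a local $C^{k-1,\alpha}$ diffeomorphism near $(0,0)$. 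By (\ref{eq: parametrize by d_K}) every closest point of $x$ with parameter $t$ obeys $F(t,d_K(x))=x$, and for $r$ small the relevant pairs $(t,d_K(x))$ lie in the neighborhood where $F$ is injective, so the closest point is unique. If instead $\nu(y)=c\mu_j$, the hypothesis forces $S_i$ to be a line segment near $y$; then $\nu$ and $\nu_K:=D\gamma^\circ(\nu)$ are constant along $S_i$ near $y$, so any two closest points $y_1,y_2$ of the same $x$ would satisfy $y_1=x-d_K(x)\nu_K=y_2$ by (\ref{eq: parametrize by d_K}).

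With every hypothesis of Theorem \ref{thm: d_K is C^2, corners} verified, that theorem gives $d_K\in C^{k,\alpha}$ near each $x\in B_r(y)\cap U$, and hence $B_r(y)\cap R_K=\emptyset$. The $C^2$-extension up to $\partial U\cap B_r(y)$ then follows from the formulas (\ref{eq: laplace d_K}), which express $Dd_K$ and $D^2 d_K$ as continuous functions of $y(x)$, $\nu(y(x))$, $\kappa_K(y(x))$, $d_K(x)$, and a unit vector $\zeta$ orthogonal to $x-y(x)$; as $x$ approaches $x_0\in\partial U\cap B_r(y)$ we have $y(x)\to x_0$, $d_K(x)\to 0$, the direction $(x-y(x))/|x-y(x)|\to D\gamma^\circ(\nu(x_0))/|D\gamma^\circ(\nu(x_0))|$, and $1-\kappa_K d_K\to 1$, so the formulas extend continuously. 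The main obstacle is the uniqueness step, which is handled by the assumption forcing $S_i$ to be a line segment when $\nu(y)$ is a critical normal---without it, $F$ is not smooth enough to invoke the inverse function theorem.
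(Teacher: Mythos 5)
Your proof is correct and follows the same overall structure as the paper's: localize the $\gamma$-closest points to a small piece of $S_{i}$, rule out $R_{K,0}$ there, note that $\kappa_{K}(y(x))d_{K}(x)\to0$ as $x\to y$ so the ridge condition cannot occur, invoke Theorem \ref{thm: d_K is C^2, corners}, and then pass to the limit in the formulas (\ref{eq: laplace d_K}) to get $C^{2}$ regularity up to the boundary. The only genuine difference is the mechanism for excluding multiple closest points: the paper's primary argument inscribes sets of the form $x-rK$ inside circles tangent to $\partial U$ near $y$, with $r$ bounded below thanks to the curvature hypotheses, while you apply the inverse function theorem to $F(t,d)=y(t)+d\,D\gamma^{\circ}(\nu(t))$ at $(t,d)=(0,0)$, where $\det DF=\gamma^{\circ}(\nu(0))>0$, and combine local injectivity with the identity (\ref{eq: parametrize by d_K}) satisfied by every closest point. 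This is essentially a cleaner, quantitative packaging of the paper's own stated alternative (the sequential contradiction from the proof of Theorem \ref{thm: d_K is C^2, corners}), and your separate treatment of the critical-normal case via constancy of $\nu_{K}$ on the line segment is exactly what is needed there.
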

\begin{proof}
First we claim that for some $r>0$ we have 
\[
B_{r}(y)\cap R_{K,0}=\emptyset.
\]
This is easy to show when $S_{i}$ is a line segment. Hence we assume
that $\nu(y)$, and consequently $\nu$ around $y$, is not a positive
multiple of any of $\mu_{j}$'s. Since $\partial U$ is at least $C^{2}$
around $y$, we can inscribe circles in $U$ which are tangent to
$\partial U$ and touch it only at one point near $y$. We can also
assume that the radii of these circles have a positive lower bound.
Now we can inscribe sets of the form $x-rK$ in each of these circles
so that it touches $\partial U$ at the same point that the circle
does. We can also assume that these $r$'s have a positive lower bound.
The reason is that $\partial K$ has positive curvature except at
a finite number of points, and those points are excluded by our assumption.
The existence of such inscribed sets implies the claim easily. Note
that as a consequence, $y$ is the $d_{K}$-closest point on $\partial U$
to some points in $U$.

Another way to prove this claim, is to assume the existence of a sequence
$x_{i}\in R_{K,0}$ that converges to $y$, and arrive at a contradiction
as we did in the proof of Theorem \ref{thm: d_K is C^2, corners}.

Now note that $\kappa_{K}$ is continuous, and hence bounded, on $\partial U$
around $y$. Thus, as $y(x)\to y$ when $x\to y$ by Lemma \ref{lem: cont of y},
we can make $r$ small enough so that $\kappa_{K}(y(x))d_{K}(x)\ne1$
for $x\in U\cap B_{r}(y)$. Therefore we have $B_{r}(y)\cap R_{K}=\emptyset$.

Next we show that $d_{K}$ is at least $C^{2}$ up to $\partial U\cap B_{r}(y)$.
To prove this, it is enough to show that $Dd_{K},D^{2}d_{K}$ have
limits as we approach $y$. Take $x\in U\cap B_{r}(y)$. Then $Dd_{K}(x)=\frac{\nu(y(x))}{\gamma^{\circ}(\nu(y(x)))}$.
When $x\to y$ we have $y(x)\to y$, so by continuity of $\nu$ we
get $Dd_{K}(x)\to\frac{\nu(y)}{\gamma^{\circ}(\nu(y))}$ as desired.

To show the same for $D^{2}d_{K}$, we use (\ref{eq: laplace d_K})
and the continuity of $\kappa,\kappa_{K}$ on $\partial U\cap B_{r}(y)$,
to get 
\[
\Delta d_{K}(x)\to\frac{-\kappa(y)|\nu(y)|^{3}|D\gamma^{\circ}(\nu(y))|^{2}}{\gamma^{\circ}(\nu(y))^{3}}.
\]
On the other hand we have 
\[
\zeta(x):=\Big(\frac{x-y(x)}{|x-y(x)|}\Big)^{\perp}=\Big(\frac{D\gamma^{\circ}(\nu(y(x)))}{|D\gamma^{\circ}(\nu(y(x)))|}\Big)^{\perp}\to\Big(\frac{D\gamma^{\circ}(\nu(y))}{|D\gamma^{\circ}(\nu(y))|}\Big)^{\perp}.
\]
Note that here we used (\ref{eq: K-normal}). Hence again by (\ref{eq: laplace d_K})
we see that $D^{2}d_{K}(x)$ has a limit as $x\to y$.
\end{proof}
\begin{rem}
When $\nu(y)$ is a positive multiple of a $\mu_{j}$ and the curvature
of $S_{i}$ is positive at $y$, $R_{K,0}$ can have $y$ as a limit
point. The same thing happens when $y$ is a nonreentrant corner.
When $y$ is a strict reentrant corner and we approach it from the
region between its inward $K$-normals, $Dd_{K}$ will not have a
limit and $D^{2}d_{K}$ will blow up, by (\ref{eq: homog}).

Finally, when $y$ is a non-strict reentrant corner, with a slight
modification of the above proof we can show that $R_{K}$ has a positive
distance from $y$, and $d_{K}$ is $C^{1}$ up to $y$.
\end{rem}

\section{\label{sec: Characterizing ridge}Characterizing the ridge}

At this point we have the tools to specify the points in the $d_{K}$-ridge
of $U$.

\begin{thm}
\label{thm: ridge}Suppose $K,U$ satisfy the same assumptions as
in Theorem \ref{thm: d_K is C^2, corners}. Then the $d_{K}$-ridge
consists of $R_{K,0}$ and those points $x$ outside of it at which
\[
\kappa_{K}(y(x))d_{K}(x)=1.
\]
Here, if $y=y(x)$ is a reentrant corner, then $x-y$ must be parallel
to one of the inward $K$-normals at $y$, and $\kappa_{K}$ is the
$K$-curvature of the corresponding boundary part.
\end{thm}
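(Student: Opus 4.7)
The plan is to prove the two inclusions of the claimed characterization separately, with the converse direction split according to whether $x$ lies in $R_{K,0}$ and, in the latter case, whether $y(x)$ is a reentrant corner.

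The easy inclusion---that every $x$ outside $R_{K,0}$ at which the stated curvature equation fails lies outside $R_K$---is an immediate consequence of Theorems \ref{thm: d_K is C^2} and \ref{thm: d_K is C^2, corners}. Each admissible configuration (interior smooth $y(x)$ with $\kappa_K d_K\neq 1$; strict reentrant corner with $x-y$ not parallel to any inward $K$-normal; strict reentrant corner with a parallel direction but $\kappa_K d_K\neq 1$; non-strict reentrant corner with $d_K\neq 1/\kappa_{K,1},\,1/\kappa_{K,2}$) is covered by those theorems and yields at least $C^{1,1}$ regularity in a neighborhood of $x$, so such $x$ is not in $R_K$.

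For the converse, split into two sub-cases. If $x\in R_{K,0}$, then Lemma \ref{lem: d_K not C1} already shows $d_K$ is not differentiable at $x$, hence $x\in R_K$ trivially. The substantive sub-case is $x\notin R_{K,0}$ with $\kappa_K(y(x))\,d_K(x)=1$: here I plan to exhibit a sequence $z_n\to x$ along which $|\Delta d_K(z_n)|\to\infty$, which rules out $d_K$ being $C^{1,1}$ in any neighborhood of $x$ (a $C^{1,1}$ function has bounded classical Hessian at every point where it exists, by the Lipschitz bound on its gradient). When $y:=y(x)$ is not a reentrant corner, take $z_n$ on the open segment $]x,y[$. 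By Lemma \ref{lem: segment to the closest pt} together with strict convexity of $\gamma$, each $z_n$ has $y$ as its unique $\gamma$-closest point, so $z_n\notin R_{K,0}$ and $d_K(z_n)<d_K(x)=1/\kappa_K(y)$. Theorem \ref{thm: d_K is C^2} applies at $z_n$, and formula (\ref{eq: laplace d_K}) gives
\[
\Delta d_K(z_n)\;=\;\frac{-\kappa(y)\,|\nu(y)|^{3}\,|D\gamma^{\circ}(\nu(y))|^{2}}{\gamma^{\circ}(\nu(y))^{3}\,(1-\kappa_K(y)\,d_K(z_n))}.
\]
Since $\kappa_K(y)d_K(x)=1>0$ forces $\kappa_K(y)>0$, Lemma \ref{lem: sign k} yields $\kappa(y)>0$ as well, so the numerator is a fixed nonzero constant while the denominator tends to $0$; hence $|\Delta d_K(z_n)|\to\infty$ and $x\in R_K$.

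The main obstacle is the reentrant-corner variant of the substantive sub-case: when $y$ is a reentrant corner, $x-y$ is parallel to an inward $K$-normal of some arc $S_i$, and $\kappa_K(y)d_K(x)=1$ for the $K$-curvature of $S_i$, the segment $]x,y[$ itself lies on the line $l$ through $x,y$ where $d_K$ equals $\gamma(\cdot-y)$ and is perfectly smooth, so no Hessian blowup is visible along it. I instead perturb off $l$ into the region $B$ constructed in the proof of Theorem \ref{thm: d_K is C^2, corners}, whose points near $y$ have $\gamma$-closest point on $S_i\setminus\{y\}$; the existence and basic properties of $B$ established there do not use $\kappa_K d_K\neq 1$, only the tangential geometry and Lemma \ref{lem: cont of y}. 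If $R_{K,0}$ accumulates at $x$, then $x\in R_K$ automatically, since in every neighborhood of $x$ the function $d_K$ fails to be differentiable at an accumulating point and so cannot be $C^{1,1}$ there. Otherwise I select $z_n\in B\setminus R_{K,0}$ with $z_n\to x$ and $y(z_n)\in S_i\setminus\{y\}$; then $y(z_n)\to y$, $d_K(z_n)\to d_K(x)=1/\kappa_K(y)$, and formula (\ref{eq: laplace d_K}) applied at $z_n$ again yields $|\Delta d_K(z_n)|\to\infty$, so $x\in R_K$. The non-strict reentrant corner case is handled analogously, approaching $x$ from whichever side of $y$ whose $K$-curvature equals $1/d_K(x)$.
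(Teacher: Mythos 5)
Your proof is correct and follows essentially the same route as the paper: the easy inclusion comes from Theorems \ref{thm: d_K is C^2} and \ref{thm: d_K is C^2, corners}, and the substantive converse is obtained by forcing $\Delta d_{K}$ to blow up via (\ref{eq: laplace d_K}), approaching through $]x,y[$ when $y$ is not a reentrant corner and through the one-sided region $B$ when it is. One small loose end: at your chosen $z_{n}\in B$ you should also rule out, or absorb into the conclusion (such points already lie in $R_{K}$ by your non-corner case), the possibility $\kappa_{K}(y(z_{n}))d_{K}(z_{n})=1$, which is needed for (\ref{eq: laplace d_K}) to apply at $z_{n}$ -- the paper sidesteps both this and the possible accumulation of $R_{K,0}$ by anchoring $B$ to interior points of $]x,y[$, where $1-\kappa_{K}d_{K}>0$.
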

\begin{proof}
So far, we showed that $R_{K}$ contains $R_{K,0}$. We also showed
in Theorems \ref{thm: d_K is C^2}, \ref{thm: d_K is C^2, corners}
that every point outside $R_{K,0}$ which is not described in the
statement of the theorem is not in $R_{K}$, i.e. those points at
which $1-\kappa_{K}d_{K}\ne0$, and those points between the $K$-normals
of a strict reentrant corner which have that corner as the $d_{K}$-closest
point.

Now to prove theorem's assertion, first suppose that $y\in S_{1}\cap S_{2}$
is a reentrant corner and $1-\kappa_{K}(y)d_{K}(x)=0$, where $\kappa_{K}$
is the $K$-curvature of $S_{1}$. Then $\kappa_{K}(y)=\frac{1}{d_{K}(x)}>0$,
and consequently $\kappa(y)>0$, where $\kappa$ is the ordinary curvature
of $S_{1}$. Consider the line segment $]x,y[$. On this segment,
$y$ is the unique $d_{K}$-closest point on $\partial U$; so $d_{K}$
decreases linearly as we move from $x$ to $y$. Hence $1-\kappa_{K}d_{K}>0$
on $]x,y[$. Thus, as seen in the proof of Theorem \ref{thm: d_K is C^2, corners},
$d_{K}$ is at least $C^{2}$ on an open set $B$, which is on one
side of $]x,y[$ and has $]x,y[$ as part of its boundary. Also, the
$d_{K}$-closest points to points of $B$ lie on $S_{1}$. Choose
a sequence $z_{i}\in B$ that converges to $x$. Then $y(z_{i})\to y$
by Lemma \ref{lem: cont of y}; and by continuity of $\kappa_{K},\kappa$
on $S_{1}$ we have 
\begin{eqnarray*}
\kappa_{K}(y(z_{i}))\to\kappa_{K}(y), &  & \kappa(y(z_{i}))\to\kappa(y).
\end{eqnarray*}
Thus in particular, $\kappa(y(z_{i}))>0$ for $i$ large enough. Since
on $B$, $\Delta d_{K}$ is given by (\ref{eq: laplace d_K}), $\Delta d_{K}(z_{i})$
blows up as $z_{i}\to x$. Therefore, $d_{K}$ can not be $C^{1,1}$
in any neighborhood of $x$.

If $y$ is not a reentrant corner, we can repeat the above argument
by simply approaching $x$ through points of $]x,y[$.
\end{proof}
The proof of the following theorem is a variant of the proof of a
similar result in \citep{MR2336304}.
\begin{thm}
\label{thm: 1-kd>0}Suppose $K,U$ satisfy the same assumptions as
in Theorem \ref{thm: d_K is C^2, corners}. Then for $x\in U-R_{K}$
we have 
\[
1-\kappa_{K}(y(x))d_{K}(x)>0.
\]
Here, if $y=y(x)$ is a reentrant corner, then $x-y$ must be parallel
to one of the inward $K$-normals at $y$, and $\kappa_{K}$ is the
$K$-curvature of the corresponding boundary part.
\end{thm}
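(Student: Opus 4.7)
The plan is to argue by contradiction, using the second variation of the map $s \mapsto \gamma(x - y(s))$ at its minimizer $y = y(x)$. Suppose $x \in U - R_K$ with $1 - \kappa_K(y)\, d_K(x) \le 0$. Since $x \notin R_K$, Theorem \ref{thm: ridge} rules out the equality $\kappa_K(y)\, d_K(x) = 1$, so I may assume $\kappa_K(y)\, d_K(x) > 1$ (in particular $\kappa_K(y) > 0$). Because $\partial K$ is $C^1$, the closest point $y$ cannot be a nonreentrant corner (as observed in the proof of Theorem \ref{thm: d_K is C^2}), so $y$ is either an interior point of some $S_i$ or a reentrant corner.

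I parametrize the relevant boundary arc by a $C^2$ curve $s \mapsto y(s)$ with $y(0) = y$, taking $s \in (-\beta,\beta)$ in the interior case and $s \in [0,\beta)$ in the reentrant-corner case, choosing the side on which $x - y$ is parallel to the inward $K$-normal (as in the theorem's hypothesis). Let $\nu(s)$ be the corresponding inward normal. Set
\[
\phi(s) := \gamma(x - y(s)).
\]
Since $y$ is the unique $\gamma$-closest boundary point to $x$, $\phi$ has a (one-sided, in the corner case) local minimum at $s = 0$. By (\ref{eq: parametrize by d_K}) we have $x - y = d_K(x)\, D\gamma^\circ(\nu(0))$, so (\ref{eq: Dg(Dg0)}) together with the $0$-homogeneity of $D\gamma$ give $D\gamma(x - y) = \nu(0)/\gamma^\circ(\nu(0))$, which is perpendicular to $y'(0)$. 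Hence $\phi'(0) = 0$ (respectively $\phi'(0^+) = 0$).

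The crux is the computation of $\phi''(0)$. From
\[
\phi''(0) = \langle D^2\gamma(x - y)\, y'(0), y'(0)\rangle - \langle D\gamma(x - y), y''(0)\rangle,
\]
I use the $(-1)$-homogeneity of $D^2\gamma$ to rewrite $D^2\gamma(x - y) = d_K(x)^{-1}\, D^2\gamma(D\gamma^\circ(\nu))$, and then exploit the duality between $\gamma$ and $\gamma^\circ$ (obtained by differentiating (\ref{eq: Dg(Dg0)})) to relate the non-zero eigenvalue of $D^2\gamma(D\gamma^\circ(\nu))$ on $D^\perp\gamma^\circ(\nu)$ to the positive eigenvalue $\lambda_\nu$ of $D^2\gamma^\circ(\nu)$ appearing in (\ref{eq: eigvlu of D2g}). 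Combined with the identity $\kappa_K(y) = \kappa(y)\,\lambda_\nu\,|\nu|$ read off from the proof of Lemma \ref{lem: sign k}, and with $\langle \nu, y''(0)\rangle = \kappa(y)|\nu|^2$ for a unit-speed parametrization, this should yield an identity of the shape
\[
\phi''(0) = \bigl(\text{strictly positive factor}\bigr) \cdot \bigl(1 - \kappa_K(y)\, d_K(x)\bigr).
\]
The degenerate case $\nu(y) = c\mu_i$ is automatic: by Assumption \ref{assu: 1} the arc must be a line segment there, so $\kappa_K \equiv 0$ by definition and $1 - \kappa_K d_K \equiv 1 > 0$ trivially.

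The one-sided minimality of $\phi$ at $s = 0$ combined with $\phi'(0) = 0$ forces $\phi''(0) \ge 0$, and the positivity of the factor then gives $1 - \kappa_K(y)\, d_K(x) \ge 0$, contradicting the assumption. The main obstacle I anticipate is the algebraic bookkeeping in computing $\phi''(0)$: decomposing $y'(0)$ in the basis $\{D\gamma^\circ(\nu), D^\perp\gamma^\circ(\nu)\}$, invoking the duality identity to convert the relevant eigenvalue of $D^2\gamma$ into one involving $\lambda_\nu$, and verifying that the various $\gamma^\circ(\nu)$ and $|D\gamma^\circ(\nu)|^2$ factors cancel cleanly. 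Everything else reduces to direct applications of the identities already collected in the preceding subsections.
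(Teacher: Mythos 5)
Your proposal is correct and follows essentially the same route as the paper: the paper also derives $1-\kappa_K(y)d_K(x)\ge 0$ from the vanishing first derivative and nonnegative second derivative of $t\mapsto\gamma(x-y(t))$ at its minimum, using (\ref{eq: parametrize by d_K}), the homogeneity relations, and the identity obtained by differentiating (\ref{eq: Dg(Dg0)}) to express $\langle D^{2}\gamma(D\gamma^{\circ}(\nu))\cdot\nu^{\perp},\nu^{\perp}\rangle$ in terms of $\kappa_K$, then combines this with $1-\kappa_K(y)d_K(x)\ne 0$ from Theorem \ref{thm: ridge}. Your contradiction framing versus the paper's direct derivation is a cosmetic difference, and the "identity of the shape (positive factor)$\cdot(1-\kappa_K d_K)$" you anticipate is exactly what the paper's bookkeeping produces, namely $\frac{|\nu|^{3}\kappa(y)[1-\kappa_{K}(y)d_{K}(x)]}{\kappa_{K}(y)d_{K}(x)\gamma^{\circ}(\nu)}\ge 0$ with the prefactor positive since $\kappa$ and $\kappa_K$ share the same sign.
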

\begin{proof}
We will show that $1-\kappa_{K}(y)d_{K}(x)\ge0$. This gives the desired
result, since we know that $1-\kappa_{K}(y)d_{K}(x)\ne0$. If $\kappa_{K}(y)=0$
the relation holds trivially, so suppose it is nonzero. Note that
as shown in the proof of Theorem \ref{thm: d_K is C^2}, $y$ cannot
be a nonreentrant corner, and the inward unit normal to $\partial U$
at $y$ is not equal to any of the $\mu_{i}$'s, since we assumed
that $\kappa_{K}(y)\ne0$. 

Let $t\mapsto y(t)$ be a smooth nondegenerate parametrization of
a segment of $\partial U$ around $y$ which has $y$ as an endpoint,
and $y(0)=y$. We assume that the direction of the parametrization
is such that $\nu:=(y')^{\perp}$ is an inward normal to $\partial U$.
Consider the function $t\mapsto\gamma(x-y(t))$. It has a minimum
at $t=0$; and there, its first derivative is 
\[
\langle D\gamma(x-y),-y'(0)\rangle=\langle D\gamma(x-y),\nu^{\perp}\rangle.
\]
But by (\ref{eq: parametrize by d_K}) we have $x-y=d_{K}(x)D\gamma^{\circ}(\nu)$.
Hence by (\ref{eq: homog}), (\ref{eq: Dg(Dg0)}), the first derivative
vanishes at $t=0$. Thus the second derivative must be nonnegative
at $t=0$, i.e. 
\[
\langle D^{2}\gamma(x-y)\,y'(0),y'(0)\rangle-\langle D\gamma(x-y),y''(0)\rangle\ge0.
\]
By using homogeneity of $D\gamma,D^{2}\gamma$, and (\ref{eq: Dg(Dg0)}),
(\ref{eq: parametrize by d_K}) we get 
\begin{equation}
\frac{1}{d_{K}(x)}\langle D^{2}\gamma(D\gamma^{\circ}(\nu))\,\nu^{\perp},\nu^{\perp}\rangle+\langle\frac{\nu}{\gamma^{\circ}(\nu)},(\nu')^{\perp}\rangle\ge0.\label{eq: 1 in thm 1-kd>0}
\end{equation}

On the other hand, by differentiating (\ref{eq: Dg(Dg0)}) we get
\[
\underset{k}{\sum}D_{ik}^{2}\gamma(D\gamma^{\circ}(\nu))D_{kj}^{2}\gamma^{\circ}(\nu)=\frac{1}{\gamma^{\circ}(\nu)}\delta_{ij}-\frac{\nu_{i}D_{j}\gamma^{\circ}(\nu)}{\gamma^{\circ}(\nu)^{2}}.
\]
Multiplying both sides by $\nu_{i}^{\perp},\nu'_{j}$ and summing
over $i,j$ gives us 
\begin{eqnarray*}
 & \langle D^{2}\gamma(D\gamma^{\circ}(\nu))\,\nu^{\perp},D^{2}\gamma^{\circ}(\nu)\,\nu'\rangle & =\underset{i,j,k}{\sum}\nu_{i}^{\perp}D_{ik}^{2}\gamma(D\gamma^{\circ}(\nu))D_{kj}^{2}\gamma^{\circ}(\nu)\nu'_{j}\\
 &  & =\frac{1}{\gamma^{\circ}(\nu)}\langle\nu^{\perp},\nu'\rangle.
\end{eqnarray*}
And by (\ref{eq: D2g.v'}) we obtain 
\[
\langle D^{2}\gamma(D\gamma^{\circ}(\nu))\,\nu^{\perp},\nu^{\perp}\rangle=\frac{1}{\kappa_{K}(y)\gamma^{\circ}(\nu)}\langle\nu^{\perp},\nu'\rangle.
\]

If we insert this in (\ref{eq: 1 in thm 1-kd>0}) and use the fact
that the ordinary curvature is given by $\kappa=\frac{\langle\nu^{\perp},\nu'\rangle}{|\nu|^{3}}$,
we deduce that 
\[
0\le\frac{1}{\kappa_{K}(y)d_{K}(x)\gamma^{\circ}(\nu)}\langle\nu^{\perp},\nu'\rangle+\frac{1}{\gamma^{\circ}(\nu)}\langle\nu,(\nu')^{\perp}\rangle=\frac{|\nu|^{3}\kappa(y)[1-\kappa_{K}(y)d_{K}(x)]}{\kappa_{K}(y)d_{K}(x)\gamma^{\circ}(\nu)}.
\]
Now, as $\kappa,\kappa_{K}$ have the same sign, we must have $1-\kappa_{K}d_{K}\ge0$
as desired.
\end{proof}
\begin{rem}
\label{rem: nonstrict corner}Suppose $y$ is a non-strict reentrant
corner, and $\kappa_{K,1}>\kappa_{K,2}$, where $\kappa_{K,1},\kappa_{K,2}$
are the $K$-curvatures at $y$ from different sides. Then by Theorem
\ref{thm: ridge}, if 
\[
x_{i}:=y+\frac{1}{\kappa_{K,i}}\nu_{K}(y)
\]
have $y$ as the only $d_{K}$-closest point on $\partial U$, then
they belong to the $d_{K}$-ridge. But, Theorem \ref{thm: 1-kd>0}
implies that $y$ cannot be a $d_{K}$-closest point to any point
on the segment $]x_{1},x_{2}]$. Thus, $x_{1}$ is the only point
along the $K$-normal at $y$, that can belong to $R_{K}-R_{K,0}$
and have $y$ as the unique $d_{K}$-closest point on $\partial U$.
\end{rem}

\bibliographystyle{plainnat}
\bibliography{Newest-Bibliography}

\def\cprime{$'$} \def\cprime{$'$} \def\cprime{$'$} \def\cprime{$'$}
  \def\cprime{$'$} \def\cprime{$'$} \def\cprime{$'$} \def\cprime{$'$}
\begin{thebibliography}{14}
\providecommand{\natexlab}[1]{#1}
\providecommand{\url}[1]{\texttt{#1}}
\expandafter\ifx\csname urlstyle\endcsname\relax
  \providecommand{\doi}[1]{doi: #1}\else
  \providecommand{\doi}{doi: \begingroup \urlstyle{rm}\Url}\fi

\bibitem[Caffarelli and Friedman(1979)]{MR534111}
L.~A. Caffarelli and A.~Friedman.
\newblock The free boundary for elastic-plastic torsion problems.
\newblock \emph{Trans. Amer. Math. Soc.}, 252:\penalty0 65--97, 1979.

\bibitem[Chru{\'s}ciel et~al.(2002)Chru{\'s}ciel, Fu, Galloway, and
  Howard]{CHRUSCIEL20021}
P.~T. Chru{\'s}ciel, J.~H.G. Fu, G.~J. Galloway, and R.~Howard.
\newblock On fine differentiability properties of horizons and applications to
  {R}iemannian geometry.
\newblock \emph{J. Geom. Phys.}, 41\penalty0 (1):\penalty0 1 -- 12, 2002.

\bibitem[Clarke et~al.(1995)Clarke, Stern, and Wolenski]{clarke1995proximal}
F.~H. Clarke, R.~J. Stern, and P.~R. Wolenski.
\newblock Proximal smoothness and the lower-{$C^2$} property.
\newblock \emph{J. Convex Anal.}, 2\penalty0 (1-2):\penalty0 117--144, 1995.

\bibitem[Crasta and Malusa(2007)]{MR2336304}
G.~Crasta and A.~Malusa.
\newblock The distance function from the boundary in a {M}inkowski space.
\newblock \emph{Trans. Amer. Math. Soc.}, 359\penalty0 (12):\penalty0
  5725--5759 (electronic), 2007.

\bibitem[Itoh and Tanaka(2001)]{itoh2001lipschitz}
J.~Itoh and M.~Tanaka.
\newblock The {L}ipschitz continuity of the distance function to the cut locus.
\newblock \emph{Trans. Amer. Math. Soc.}, 353\penalty0 (1):\penalty0 21--40,
  2001.

\bibitem[Li and Nirenberg(2005)]{MR2094267}
Y.~Li and L.~Nirenberg.
\newblock The distance function to the boundary, {F}insler geometry, and the
  singular set of viscosity solutions of some {H}amilton-{J}acobi equations.
\newblock \emph{Comm. Pure Appl. Math.}, 58\penalty0 (1):\penalty0 85--146,
  2005.

\bibitem[Mantegazza and Mennucci(2003)]{mantegazza2003hamilton}
C.~Mantegazza and A.~C. Mennucci.
\newblock Hamilton-{J}acobi equations and distance functions on {R}iemannian
  manifolds.
\newblock \emph{Appl. Math. Optim.}, 47\penalty0 (1):\penalty0 1--25, 2003.

\bibitem[Poliquin et~al.(2000)Poliquin, Rockafellar, and
  Thibault]{poliquin2000local}
R.~Poliquin, R.~Rockafellar, and L.~Thibault.
\newblock Local differentiability of distance functions.
\newblock \emph{Trans. Amer. Math. Soc.}, 352\penalty0 (11):\penalty0
  5231--5249, 2000.

\bibitem[Safdari(2015)]{Safdari20151}
M.~Safdari.
\newblock The free boundary of variational inequalities with gradient
  constraints.
\newblock \emph{Nonlinear Anal. Theory Methods Appl.}, 123-124:\penalty0 1 --
  22, 2015.

\bibitem[Safdari(2017)]{safdari2017shape}
M.~Safdari.
\newblock On the shape of the free boundary of variational inequalities with
  gradient constraints.
\newblock \emph{Interfaces Free Bound.}, 19\penalty0 (2):\penalty0 183--200,
  2017.

\bibitem[Safdari(2018{\natexlab{a}})]{MR1}
M.~Safdari.
\newblock The regularity of some vector-valued variational inequalities with
  gradient constraints.
\newblock \emph{Comm. Pure Appl. Anal.}, 17\penalty0 (2):\penalty0 413--428,
  2018{\natexlab{a}}.

\bibitem[Safdari(2018{\natexlab{b}})]{Safdari[18]}
M.~Safdari.
\newblock Global optimal regularity for variational problems with nonsmooth
  non-strictly convex gradient constraints.
\newblock \emph{preprint}, arxiv.org/abs/1807.01590v1, 2018{\natexlab{b}}.

\bibitem[Schneider(2014)]{MR3155183}
R.~Schneider.
\newblock \emph{Convex bodies: the {B}runn-{M}inkowski theory}, volume 151 of
  \emph{Encyclopedia of Mathematics and its Applications}.
\newblock Cambridge University Press, Cambridge, expanded edition, 2014.

\bibitem[Ting(1966)]{MR0184503}
T.~W. Ting.
\newblock The ridge of a {J}ordan domain and completely plastic torsion.
\newblock \emph{J. Math. Mech.}, 15:\penalty0 15--47, 1966.

\end{thebibliography}

\end{document}